\newtheorem{theorem}{Theorem}
\newtheorem{algorithm}{Algorithm}
\newtheorem{remark}{Remark}
\newtheorem{step}{\sc Step}
\numberwithin{equation}{section}
\newcommand{\eqmath}[3][l]{\eqmakebox[#2][#1]{$\displaystyle\if#1l{}\fi#3$}}
\newcommand{\ftext}[2]{\makebox[#1][l]{\textup{#2}}}
\numberwithin{equation}{section}
\begin{document}

\title[Refactorization of Cauchy’s method applied to FSI]{Refactorization of Cauchy’s method: a second--order partitioned method for fluid--thick structure interaction problems}



\author[Bukac]{Martina Buka\v{c}}

\address{Department of Applied and Computational Mathematics and Statistics, University of Notre Dame, Notre Dame, IN 46556, USA.  }\email{mbukac@nd.edu} 

\author[Seboldt]{Anyastassia Seboldt}

\address{Department of Applied and Computational Mathematics and Statistics, University of Notre Dame, Notre Dame, IN 46556, USA.  }\email{aseboldt@nd.edu} 

\author[Trenchea]{Catalin Trenchea}

\address{Department of Mathematics, University of Pittsburgh, Pittsburgh, PA 15260,
USA.} 
\email{trenchea@pitt.edu}



\subjclass{Primary 65M12; Secondary 76Z05}

\keywords{Fluid-structure interaction, partitioned scheme, second-order convergence, strongly-coupled}

\begin{abstract}
This work focuses on the derivation and the analysis of a novel, strongly-coupled partitioned method for fluid-structure interaction problems. 
The flow is assumed to be  viscous and incompressible, and the structure is modeled using 
linear elastodynamics equations. We assume that the structure is thick, i.e., modeled using the same number of spatial dimensions as fluid. Our newly developed numerical method is based on generalized Robin boundary conditions, 
as well as on the refactorization of the Cauchy's one-legged `$\theta$-like' method, written as a sequence of Backward Euler--Forward  Euler steps used to discretize the problem in time. 
This family of methods, parametrized by $\theta$, is B-stable for any $\theta \in [\frac12 , 1]$ and second-order accurate for $\theta = \frac12 + \mathcal{O}(\tau)$, where $\tau$ is the time step. In the proposed algorithm, the fluid and structure  sub-problems, discretized using the  Backward Euler scheme, are first solved iteratively
until convergence. Then, the variables are linearly extrapolated, 
equivalent to solving  Forward Euler problems. 
We prove that the  iterative procedure  is convergent, and that the proposed method is  stable provided $\theta \in [\frac12,1]$. 
Numerical examples, based on the finite element discretization in space,  explore convergence rates using different values of parameters in the problem, and compare our method to other strongly-coupled partitioned schemes from the literature. We also compare our  method to both a monolithic and a non-iterative partitioned solver on a benchmark problem with parameters within the physiological range of blood flow, obtaining an excellent agreement with the monolithic scheme.

\end{abstract}

\maketitle

\section{Introduction}
Fluid-structure interaction (FSI) describes a specific type of problem that involves the highly non-linear relationship between a fluid and deformable structure. The importance of solving FSI problems is made clear when one simply sits back to observe everyday life -- in the wind that blows across an airplane wing or bridge, a vessel or fish that ventures across the ocean, or even someone's heart sending a pulse of blood through an artery. Since these ubiquitous occurrences cannot be over-emphasized with their pertinent applications in the biomedical, engineering, and architectural realms,  FSI problems have received a lot of attention from both theoretical and computational perspectives. In particular,  with increasing medical demand revolving around hemodynamics-related problems, engineering advancements for the understanding and improvement of aeronautical and naval applications, and demand for more sustainable energy harvesting, there is a high demand for fast and accurate numerical solvers for FSI problems.

Two main methodologies for numerically solving FSI problems are monolithic  and partitioned schemes. Both monolithic and partitioned algorithms begin with the same set of governing equations describing the motion of the fluid and solid, as well as their interaction at the interface, but differ in the way they are solved. Monolithic schemes~\cite{bazilevs2008isogeometric,deparis2003acceleration,gerbeau2003quasi,nobile2001numerical,gee2011truly,ryzhakov2010monolithic,hron2006monolithic,bathe2004finite} solve the governing equations in one, fully-coupled, algebraic system, with implicitly imposed boundary conditions.  While in this way the fluid and structure remain strongly-coupled,  the large system may likely become ill-conditioned and require specially designed preconditioners. Furthermore, since  all of the unknown variables are simultaneously solved, this approach is rather computationally expensive~\cite{gee2011truly,badia2008modular,heil2008solvers}. On the other hand, partitioned methods~\cite{degroote2008stability,farhat2006provably,bukavc2012fluid,badia2009robin,BorSunMulti,Fernandez2012incremental,fernandez2013fully,nobile2008effective,hansbo2005nitsche,lukavcova2013kinematic,banks2014analysis,banks2014analysis2,oyekole2018second,bukavc2016stability} use separate solvers for the fluid and structure sub-problems while enforcing coupling at the interface using a variety of potential boundary conditions, e.g. Dirichlet, Neumann, and Robin. In this way, each sub-problem has fewer unknowns and is better conditioned, making the partitioned scheme less computationally expensive in comparison to the monolithic solver. However, stability issues often arise as a result of the coupling at the interface unless the design and implementation of a partitioned scheme is carefully developed.

Furthering the varying difficulties that arise amongst partitioned methods, certain physical factors play another role in making FSI problems especially challenging to solve. One such instance occurs in hemodynamics, where the structure and fluid densities are comparable, jeopardizing the stability due to \textit{the added mass effect}~\cite{causin2005added}. In the case of thin structures, enforcing the structure's mass in the fluid problem is successfully done using different Robin boundary conditions on the interface~\cite{oyekole2018second,bukavc2016stability,Fernandez2012incremental,lukavcova2013kinematic}. However, such approaches cannot be directly applied when the dimension of the structure is the same as that of the fluid, i.e., in case of thick structures. 

Whenever these {added mass effect} scenarios occur in FSI cases with thick structures, classical Dirichlet-Neumann approaches are notorious for falling short because they are proven to be unconditionally unstable~\cite{causin2005added}. Even when sub-iterations are implemented in these cases  in order to enforce stability (resulting in strongly-coupled partitioned schemes), convergence issues still arise. Hence, alternative options are to use the Robin-Dirichlet, Robin-Neumann, and Robin-Robin types of boundary conditions to be implemented on the fluid-structure interface~\cite{nobile2008effective,badia2009robin,badia2008fluid,gerardo2010analysis,degroote2011similarity,gigante2019stability}. These have been intensively analyzed for their efficacy in maintaining stability and convergence, and therefore widely used in different applications.
 We also mention the fictitious-pressure and fictitious-mass strongly-coupled algorithms proposed in~\cite{baek2012convergence,yu2013generalized}, in which  the added mass effect is accounted for by incorporating additional terms into governing equations. 
 
 When FSI problems with thick structures are solved using partitioned methods without sub-iterations, sub-optimal convergence in time may become an issue as seen in~\cite{burman2020fully,burman2009stabilization,fernandez2015generalized,bukavc2016stability,paper1}.  In particular, a  partitioned, loosely-coupled scheme  based on the Nitsche's penalty method was proposed in~\cite{burman2009stabilization,burman2013unfitted}, where some interface terms were time-lagged  in order to decouple the fluid and structure sub-problems. The  scheme is proved to be  stable under a CFL condition if a weakly consistent stabilization term that includes pressure variations at the interface is added. It was shown that the rate of convergence in time is $\mathcal{O}(\tau^{\frac12})$, which was then corrected to obtain $\mathcal{O}(\tau)$ by proposing a few defect-correction sub-iterations. 
A non-iterative, generalized Robin-Neumann partitioned scheme based on an interface operator accounting for the
solid inertial effects within the fluid, has been proposed in~\cite{fernandez2015generalized}. The scheme has been analyzed on a linear FSI problem and shown to be stable under a time-step condition. However, a time step $ \tau = \mathcal{O} (h^{\frac32})$, where $h$ is the mesh size, is needed to achieve a first-order accuracy.  An alternative class of Added-Mass Partitioned algorithms has also been developed in~\cite{banks2014analysis2,banks2014analysis,serino2019stable}.
A non-iterative, partitioned algorithm for FSI with thick structures was first proposed in~\cite{banks2014analysis2}.  It was shown that the algorithm is stable  under a condition on  the time step, which depends on the structure parameters. Although the authors do not derive the convergence rates, their numerical results indicate that the scheme is second-order accurate in time. In~\cite{serino2019stable}, the previously developed algorithms have been extended to finite deformations, and the explicit fluid solver was replaced by a fractional-step implicit-explicit scheme.

 In our previous work~\cite{bukavc2014modular}, using the operator splitting approach,  we developed a partitioned scheme for FSI with a thick, linearly viscoelastic structure. However, the assumption that the structure is viscoelastic was necessary in the derivation of the scheme, and the solid viscosity was solved implicitly with the fluid problem. Furthermore, the scheme was shown to be stable only under a condition on the time step~\cite{bukavc2016stability}. More recently, we proposed a  loosely-coupled method for FSI with thick structures~\cite{paper1} based on generalized Robin coupling conditions, similar to a parallel work  by Burman et al~\cite{burman2019stability}. We proved that the method is unconditionally stable on a moving domain FSI problem using energy estimates. However, the method is shown to be only $\mathcal{O}(\tau^{\frac12})$ in time~\cite{paper1,burman2020fully}.

 In this paper, we are interested in solving FSI problems with thick, elastic structures and with a particular interest in applications with similar fluid and solid densities. The time dependent Stokes equations are used to describe the incompressible, viscous fluid, and the linearly elastic equations are employed for the solid. We propose a novel partitioned, strongly-coupled scheme, where the interface conditions are enforced with the use of generalized Robin coupling conditions. These conditions are obtained by linearly combining  the kinematic (Dirichlet) and dynamic (Neumann) interface conditions, along with the use of a combination parameter, $\alpha$, whose purpose is to dictate the emphasis on either the kinematic or dynamic  condition.  The time discretization is based on the one-legged `$\theta$-like' method proposed by Cauchy~\cite{MR1013996}. This family of methods, parametrized by $\theta$,  is B-stable for any $\theta \in [\frac12 , 1]$ and second-order accurate for $\theta = \frac12 + \mathcal{O}({ \tau})$. We note that  $\theta=\frac12$ corresponds to the
 midpoint rule. 
In this work,  similarly as  in~\cite{burkardt2020refactorization}, we refactorize the Cauchy's method by writing it as sequential Backward Euler (BE)--Forward Euler (FE) problems.
This results in first solving the partitioned scheme on $[t^n, t^{n+\theta}]$ discretized using the Backward Euler method in which we sub-iterate the fluid and structure sub-problems until convergence. Then, the variables are linearly extrapolated, equivalent to solving the Forward Euler problems on $[t^{n+\theta}, t^{n+1}]$.
  We prove that the sub-iterative process is convergent and that the proposed method is stable provided $\theta \in [\frac12, 1]$. 
  
The promising theoretical results are further illustrated in numerical examples, where the finite element method is used to discretize the problem in space. The first example uses the method of manufactured solutions to investigate convergence rates across varying values of the combination parameter, $\alpha$, used in the derivation of the generalized Robin coupling condition, a parameter $\theta$, used in the time discretization, and the tolerance, $\epsilon$, used to control the sub-iterative procedure. The examples successfully meet, and in some cases exceed, the expectations of the second-order convergence in time. We also compare the average number of sub-iterations across multiple sub-iterative methods to our novel scheme in order to illustrate the reduced computational cost of our iterative approach. In particular, we show that the proposed method requires significantly fewer sub-iterations than the aforementioned Robin-Neumann and Robin-Robin methods.
In the second numerical example, we model the flow in  a two-dimensional channel with parameters similar to that of blood flow in order to show our method's competitiveness when compared to both a monolithic and a loosely-coupled, partitioned method.


The outline of this paper is as follows: We define the problem in Section 2 and elaborate upon our novel numerical method in Section 3. Convergence of the iterative procedure is analyzed and proven in Section 4, following up with the stability analysis in Section 5. Numerical examples are presented in Section 6. Section 7 highlights the conclusions of the main topics and results presented in this paper.

\section{Problem description}

We consider the interaction between an incompressible, viscous fluid and a linearly elastic structure. The fluid domain is denoted by $\Omega_F$ and the structure domain by $\Omega_S$. We assume that $\Omega_F, \Omega_S \subset \mathbb{R}^d, d=2,3$ are open, smooth sets of the same dimension, and that the fluid-structure interface $\Gamma$  is the common boundary between the two domains, i.e. $ \partial {\Omega}_F \cap \partial {\Omega}_S = \varnothing,  \partial \bar{\Omega}_F \cap \partial \bar{\Omega}_S=\Gamma, $ (see Figure~\ref{domain}). The fluid inlet and outlet boundaries are designated by $\Gamma_F^{in}$ and $\Gamma_F^{out},$ respectively, the  solid inlet and outlet boundaries by $\Gamma_S^{in}$ and $\Gamma_S^{out},$ respectively, and the external solid boundary by $\Gamma_S^{ext}$. Therefore,  $\partial \Omega_F = \Gamma_F^{in} \cup \Gamma_F^{out} \cup \Gamma$ and $\partial \Omega_S = \Gamma_S^{in} \cup \Gamma_S^{out} \cup \Gamma_S^{ext} \cup \Gamma$. 
In the following, similarly as in~\cite{burman2019stability,burman2020fully,bukavc2016stability,burman2009stabilization,fernandez2015generalized}, we assume that the structure deformation is infinitesimal,  that fluid-structure interaction is linear, and that the fluid domain does not change. 
\begin{figure}[ht]
        \centering{
\includegraphics[scale=0.7]{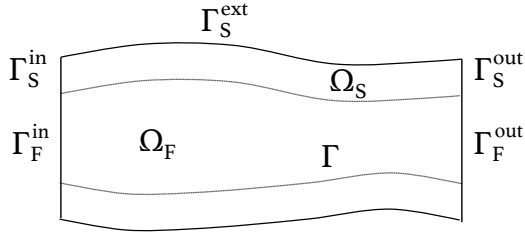}
        }
        \caption{ Fluid domain $\Omega_F$ and  structure domain $\Omega_S$, separated by a common interface $\Gamma$.}
         \label{domain}
 \end{figure}

To model the fluid flow, we use the time dependent Stokes equations, given as follows:
\begin{subequations}
\begin{align}\label{NSale1}
& \rho_F   \partial_t \boldsymbol{u}  = \nabla \cdot \boldsymbol\sigma_F(\boldsymbol u, p) + \boldsymbol f_F&  \textrm{in}\; \Omega_F\times(0,T), \\
 \label{NSale2}
&\nabla \cdot \boldsymbol{u} = 0 & \textrm{in}\; \Omega_F \times(0,T),
\end{align}
\label{flow}
\end{subequations}
where $\boldsymbol{u}$ is the fluid velocity,  $\rho_F$ is fluid density,  $\boldsymbol\sigma_F$ is the fluid stress tensor and $\boldsymbol f_F$ is the forcing term. 
For a Newtonian fluid, the stress tensor is given by $\boldsymbol\sigma_F(\boldsymbol u,p) = -p \boldsymbol{I} + 2 \mu_F \boldsymbol{D}(\boldsymbol{u}),$ where  $p$ is the fluid pressure,
$\mu_F$ is the fluid viscosity and  $\boldsymbol{D}(\boldsymbol{u}) = (\nabla \boldsymbol{u}+(\nabla \boldsymbol{u})^{T})/2$ is the strain rate tensor.
At the inlet and outlet sections we prescribe Neumann boundary conditions:
\begin{subequations}
\begin{align}
& \boldsymbol {\sigma}_F \boldsymbol{ n}_{F} = -p_{in} (t) \boldsymbol{n}_{F}  &  \textrm{on} \; \Gamma_{F}^{in} \times (0,T), \label{inlet}  \\
& \boldsymbol {\sigma}_F \boldsymbol{ n}_{F} = -p_{out}(t)  \boldsymbol{n}_{F}  &  \textrm{on} \; \Gamma_{F}^{out} \times (0,T), \label{outlet} 
\end{align}
\label{neumann}
\end{subequations}
where $\boldsymbol n_F$ is the outward unit normal to the  fluid domain.


To model the elastic structure, we use the elastodynamics equations written in the first order form as
\begin{subequations}
\begin{align}
&{\rho}_S \partial_{t} {\boldsymbol \xi}  =  {\nabla} \cdot \boldsymbol \sigma_S(\boldsymbol \eta) &  \textrm{in}\; {\Omega}_S\times(0,T), 
\label{solid}
\\
& \partial_{t} {\boldsymbol \eta}  =  \boldsymbol \xi &  \textrm{in}\; {\Omega}_S\times(0,T), 
\end{align}
\label{solid}
\end{subequations}
where $\boldsymbol{\eta}$ is the structure displacement,   $\boldsymbol{\xi}$ is the structure velocity, ${\rho}_S$ is the structure density and ${\boldsymbol \sigma_S}$ is the solid Cauchy stress tensor. To relate the stress and strain fields, we use the Saint-Venant Kirchhoff constitutive law given as
\begin{align*}
 \boldsymbol \sigma_S(\boldsymbol \eta) = 2 \mu_S \boldsymbol D(\boldsymbol \eta) + \lambda_S (\nabla \cdot \boldsymbol \eta) \boldsymbol I,
\end{align*}
where $\mu_S$ and $\lambda_S$ are Lam\'e constants.
We define a norm associated with the structure elastic energy as
\begin{align}
\| \boldsymbol \eta \|^2_S = 2 \mu_S  \| \boldsymbol D(\boldsymbol \eta)\|^2_{L^2(\Omega_S)} + \lambda_S \|\nabla \cdot \boldsymbol \eta\|^2_{L^2(\Omega_S)}.
\label{sNorm}
\end{align}
The structure is assumed to be fixed at the inlet and outlet boundaries:
\begin{equation}\label{homostructure1}
 {\boldsymbol \eta} = 0 \quad \textrm{on} \;\; {\Gamma}_S^{in} \cup {\Gamma}_S^{out} \times(0,T),
\end{equation}
and at the external structure boundary, ${\Gamma}_S^{ext}$, we impose:
\begin{equation}\label{homostructure2}
{\boldsymbol \sigma_S}  {\boldsymbol{ n}}_S =  0 \quad \textrm{on}  \;\; {\Gamma}_S^{ext} \times (0,T),
 \end{equation}
 where ${\boldsymbol n}_S$ is the outward normal to the  structure domain.

To couple the fluid and structure sub-problems, we prescribe the kinematic and dynamic coupling conditions~\cite{langer2018numerical,multilayered}, given as follows:

\noindent \emph{Kinematic (no-slip) coupling condition} describes the continuity of  velocity at the fluid-structure interface: 
\begin{equation}
{\boldsymbol{u}} = {\boldsymbol \xi}  \;\; \; \textrm{on} \; {\Gamma} \times (0,T), \label{kinematic}
\end{equation}  

\noindent \emph{Dynamic coupling condition} describes the continuity of stresses at the fluid-structure interface:
\begin{equation}
\boldsymbol \sigma_F \boldsymbol n_F +  {\boldsymbol \sigma_S } {\boldsymbol n}_S=0 \;\;\; \textrm{on} \; \Gamma \times (0,T). \label{dynamic}
 \end{equation}
 Initially, the fluid and  structure are assumed to be at rest:
\begin{equation}\label{initial}
\boldsymbol{u}=0 \quad  \textrm{in} \; {\Omega}_F,  \quad {\boldsymbol \eta}=0,
 \; {\boldsymbol \xi} =0 \quad \textrm{in} \; {\Omega}_S \quad {\rm at}\; t=0.
\end{equation}

\section{Numerical method}

Let  $t^n = n \tau$ for $n=0, \ldots, N$, where $\tau$ denotes the time step, and $t^{n+\theta} = t^n + \theta \tau$, for any $\theta\in [0,1]$ and for all $n\geq 0$. Let $z^n$ denote the approximation of a time-dependent function $z$ at time level $t^n$.  
The proposed algorithm is based on the refactorization of the Cauchy's one-legged `$\theta$-like' method. In particular, for an initial value problem
$y'=f(t,y(t))$, the Cauchy's one-legged `$\theta$-like' method is given as
\begin{align*}
\frac{y^{n+1}-y^n}{\tau} = f(t^{n+\theta}, y^{n+\theta}),
\end{align*}
for $\theta \in [0,1]$, where $y^{n+\theta} = \theta y^{n+1} + (1-\theta) y^n$.
The problem above  can be solved in the BE-FE fashion~\cite{burkardt2020refactorization} as
\begin{align*}
& \mbox{BE:  }\quad  \frac{y^{n+\theta}-y^n}{ \theta \tau} = f(t^{n+\theta}, y^{n+\theta}),
\\
& \mbox{FE:  }\quad  \frac{y^{n+1}-y^{n+\theta}}{(1- \theta) \tau} = f(t^{n+\theta}, y^{n+\theta}).
\end{align*}
The  FE problem can also be written as a linear extrapolation given by
$$
y^{n+1} = \frac{1}{\theta} y^{n+\theta}-\left(\frac{1}{\theta}-1\right) y^n.
$$
 We note that the case when $\theta=\frac12$ corresponds to the midpoint rule. Using this approach, the main computational load of the algorithm is related to the computation of the BE steps, while computationally inexpensive linear extrapolations increase the accuracy of the scheme.

 While the governing equations will be discretized using this approach, we impose the generalized Robin coupling conditions at the interface. 
Similar as in~\cite{badia,badia2009robin}, we consider a linear combination of  coupling conditions~\eqref{kinematic}-\eqref{dynamic}
\begin{align*}
& \alpha \boldsymbol \xi+  \boldsymbol{\sigma}_S {\boldsymbol n}_S= \alpha \boldsymbol{u} - \boldsymbol \sigma_F  \boldsymbol n_F    \;\;\; \textrm{on} \; \Gamma \times (0,T), 
 \end{align*}
where $\alpha>0$ is a  combination parameter. As in~\cite{paper1}, by imposing~\eqref{dynamic} one more time, we introduce the following two  transmission conditions of Robin type:
\begin{align}
& \alpha \boldsymbol \xi +  \boldsymbol{\sigma}_S  {\boldsymbol n}_S= \alpha \boldsymbol{u}    - \boldsymbol \sigma_F \boldsymbol n_F \;\;\; \textrm{on} \; \Gamma \times (0,T), \label{cc1}
\\
  & 
  \alpha \boldsymbol \xi    - \boldsymbol \sigma_F   \boldsymbol n_F 
 = 
\alpha \boldsymbol{u} 
 - \boldsymbol \sigma_F   \boldsymbol n_F.
  \quad  \textrm{on} \; \Gamma \times (0,T).
   \label{cc2}
 \end{align}
These conditions will be used in the BE, sub-iterative part of our algorithm. In particular, by taking the values on the right-hand side  from the previous sub-iteration, equation~\eqref{cc1}  will serve as a Robin-type boundary condition for the BE structure sub-problem. Using the most recent values of the structure variables, equation~\eqref{cc2} will serve as a Robin-type boundary condition for the BE fluid sub-problem. 
The proposed partitioned numerical method is given in the following algorithm. 

\begin{algorithm}
\label{algorithm1}
Given ${\boldsymbol u}^{0}$ in $\Omega_F$, and $ {\boldsymbol \eta}^0,  {\boldsymbol \xi}^0$ in $\Omega_S$, 
we first need to compute $
p^{\theta}, p^{1+\theta}, {\boldsymbol u}^{1}, {\boldsymbol u}^{2}$ in $\Omega_F$, and $ 
{\boldsymbol \eta}^{1}, {\boldsymbol \eta}^2,  {\boldsymbol \xi}^{1},  {\boldsymbol \xi}^{2}
$ in $\Omega_S$ with a  second-order method. 
A monolithic method could be used.
Then, for all $n\geq 2$, compute the following steps:
\begin{step}
Set the initial guesses as the linearly extrapolated values:
\begin{align*}
& {\boldsymbol \eta}^{n+\theta}_{(0)}
=  \Big( 1 + \theta  \Big) {\boldsymbol \eta}^{n}  - \theta {\boldsymbol \eta}^{n-1} ,
\end{align*}
and similarly for ${\boldsymbol \xi}^{n+\theta}_{(0)}, {\boldsymbol u}^{n+\theta}_{(0)}$.
The pressure initial guess is defined as 
\begin{align*}
p^{n+\theta}_{(0)} = (1 + \tau) p^{n-1+\theta}
				-\tau  p^{n-2+\theta}
.
\end{align*}
For  $\kappa\geq 0$, compute until convergence the following \textbf{Backward Euler} partitioned problem:
\begin{subequations}
\begin{empheq}[left= \ftext{2.2em}{Solid:} \;  {\empheqlbrace \quad}]{alignat=2}
&
  \eqmath[l]{A}{ \frac{{\boldsymbol \eta}^{n+\theta}_{(\kappa+1)} - {\boldsymbol \eta}^{n}}{\theta \tau} 
= 
{\boldsymbol \xi}^{n+\theta}_{(\kappa+1)}} 
& \eqmath[r]{B}{\mbox{ in }\Omega_S,}
\\
&
  \eqmath[l]{A}{ \rho_S \frac{\boldsymbol{\xi}_{(\kappa+1)}^{n+\theta} - \boldsymbol{\xi}^n}{\theta \tau} = \nabla \cdot \boldsymbol{\sigma}_S(\boldsymbol{\eta}_{(\kappa+1)}^{n+\theta})}
& \eqmath[r]{B}{\mbox{ in }\Omega_S,}
\\
&
  \eqmath[l]{A}{\alpha \boldsymbol{\xi}^{n+\theta}_{(\kappa+1)} + \boldsymbol\sigma_S(\boldsymbol{\eta}^{n+\theta}_{(\kappa+1)})\boldsymbol{n}_S = \alpha \boldsymbol{u}_{(\kappa)}^{n+\theta} }
  \notag
  \\
  &
    \eqmath[l]{A}{  \qquad
    - \boldsymbol\sigma_F(\boldsymbol{u}^{n+\theta}_{(\kappa)},p^{n+\theta}_{(\kappa)})\boldsymbol{n}_F}
& \eqmath[r]{B}{\mbox{ on } \Gamma,}
\\
&
  \eqmath[l]{A}{\boldsymbol \eta^{n+\theta}_{(\kappa+1)} = 0}
    & \eqmath[r]{B}{\mbox{ on } \Gamma^{in}_S \cup \Gamma^{out}_S,}
    \\
    &
  \eqmath[l]{A}{{\boldsymbol \sigma_S (\boldsymbol{\eta}_{(\kappa+1)}^{n+\theta})}  {\boldsymbol{ n}}_S =  0 }
    & \eqmath[r]{B}{\mbox{ on } {\Gamma}_S^{ext},}
\end{empheq}
\label{eq:Dstrctr+1/2--kappa}
\end{subequations}
\begin{subequations}
\begin{empheq}[left= \ftext{2.2em}{Fluid:}  \;  {\empheqlbrace \quad}]{alignat=2}
&
 \eqmath[l]{A}{\rho_F \frac{{\boldsymbol u}^{n+\theta}_{(\kappa+1)} - {\boldsymbol u}^{n}}{\theta \tau} 
- \nabla\cdot {\boldsymbol \sigma}_F ({\boldsymbol u}^{n+\theta}_{(\kappa+1)} , p^{n+\theta}_{(\kappa+1)} )
}
\\
&
 \eqmath[l]{A}{
\qquad = \boldsymbol{f}_F(t^{n+\theta}) }
& \eqmath[r]{B}{ \mbox{ in }\Omega_F,}
\\
&
 \eqmath[l]{A}{\nabla \cdot {\boldsymbol u}^{n+\theta}_{(\kappa+1)} = 0}
& \eqmath[r]{B}{\mbox{ in }\Omega_F,}
\\
&
 \eqmath[l]{A}{\alpha \boldsymbol{u}^{n+\theta}_{(\kappa+1)} - \boldsymbol\sigma_F(\boldsymbol{u}_{(\kappa)}^{n+\theta},p_{(\kappa)}^{n+\theta}) \boldsymbol{n}_F
 = \alpha \boldsymbol{\xi}^{n+\theta}_{(\kappa+1)} 
 }
 \notag
 \\
 &
 \eqmath[l]{A}{\qquad -\boldsymbol\sigma_F(\boldsymbol{u}_{(\kappa+1)}^{n+\theta},p_{(\kappa+1)}^{n+\theta}) \boldsymbol{n}_F}
&\eqmath[r]{B}{ \mbox{ on }\Gamma,}
\\
&
 \eqmath[l]{A}{ \boldsymbol {\sigma}_F(\boldsymbol u^{n+\theta}_{(\kappa+1)}, p^{n+\theta}_{(\kappa+1)}) \boldsymbol{ n}_{F} = -p_{in} (t^{n+\theta}) \boldsymbol{n}_{F} 
}
 & 
 \eqmath[r]{B}{ \mbox{on }  \Gamma_{F}^{in},}
 \\
 &
  \eqmath[l]{A}{ \boldsymbol {\sigma}_F(\boldsymbol u^{n+\theta}_{(\kappa+1)}, p^{n+\theta}_{(\kappa+1)})  \boldsymbol{ n}_{F} = -p_{out} (t^{n+\theta}) \boldsymbol{n}_{F}
}
 & 
 \eqmath[r]{B}{ \mbox{on }  \Gamma_{F}^{out}.}
\end{empheq}
\label{eq:Dflow+1/2--kappa}
\end{subequations}
The converged solutions,
\begin{align}
{\boldsymbol \eta}^{n+\theta}_{(\kappa)} , 
{\boldsymbol \xi}^{n+\theta}_{(\kappa)},
{\boldsymbol u}^{n+\theta}_{(\kappa)} , 
p^{n+\theta}_{(\kappa)} 
\xrightarrow{\kappa\rightarrow\infty}
{\boldsymbol \eta}^{n+\theta} , 
{\boldsymbol \xi}^{n+\theta} , 
{\boldsymbol u}^{n+\theta} , 
p^{n+\theta},
\nonumber
\end{align}
then satisfy:
\begin{subequations}
\begin{empheq}[left= \ftext{2.2em}{Solid:}  \;  {\empheqlbrace} \quad]{alignat=2}
&
 \eqmath[l]{A}{\frac{{\boldsymbol \eta}^{n+\theta} - {\boldsymbol \eta}^{n}}{\theta \tau} 
= 
{\boldsymbol \xi}^{n+\theta}}
&
\eqmath[r]{B}{ \mbox{ in }\Omega_S
,}
\label{BEsolida}
\\
&
 \eqmath[l]{A}{ \rho_S \frac{\boldsymbol{\xi}^{n+\theta} - \boldsymbol{\xi}^n}{\theta \tau} = \nabla \cdot \boldsymbol{\sigma}_S(\boldsymbol{\eta}^{n+\theta})}
&
\eqmath[r]{B}{ \mbox{ in }\Omega_S
,}
\label{BEsolidb}
\\
&
 \eqmath[l]{A}{ \boldsymbol\sigma_S(\boldsymbol{\eta}^{n+\theta})\boldsymbol{n}_S =  - \boldsymbol\sigma_F(\boldsymbol{u}^{n+\theta},p^{n+\theta})\boldsymbol{n}_F}
& 
\eqmath[r]{B}{ \mbox{ on }\Gamma
,}
\label{BEsolidc}
\\
&
  \eqmath[l]{A}{\boldsymbol \eta^{n+\theta}= 0}
    & \eqmath[r]{B}{\mbox{ on } \Gamma^{in}_S \cup \Gamma^{out}_S,}
    \label{BEsolidd}
    \\
    &
  \eqmath[l]{A}{{\boldsymbol \sigma_S (\boldsymbol{\eta}^{n+\theta})}  {\boldsymbol{ n}}_S =  0 }
    & \eqmath[r]{B}{\mbox{ on } {\Gamma}_S^{ext},}
\label{BEsolide}
\end{empheq}
\label{eq:Dstrctr+1/2}
\end{subequations}
\begin{subequations}
\begin{empheq}[left= \ftext{2.2em}{Fluid:}  \;  {\empheqlbrace} \quad]{alignat=2}
&
 \eqmath[l]{A}{\rho_F \frac{{\boldsymbol u}^{n+\theta} - {\boldsymbol u}^{n}}{\theta \tau} 
- \nabla\cdot {\boldsymbol \sigma}_F ({\boldsymbol u}^{n+\theta} , p^{n+\theta})
}
\notag
\\
&
 \eqmath[l]{A}{
 \qquad
 = \boldsymbol{f}_F(t^{n+\theta}) }
&
\eqmath[r]{B}{ 
 \quad  \mbox{ in }\Omega_F,}
\label{BEfluida}
\\
&
 \eqmath[l]{A}{\nabla \cdot {\boldsymbol u}^{n+\theta} = 0}
&
\eqmath[r]{B}{ \mbox{ in }\Omega_F,}
\label{BEfluidb}
\\
&
 \eqmath[l]{A}{\boldsymbol{u}^{n+\theta}
 =  \boldsymbol{\xi}^{n+\theta}}
&
\eqmath[r]{B}{ \mbox{ on }\Gamma.}
\label{BEfluidc}
\\
&
 \eqmath[l]{A}{ \boldsymbol {\sigma}_F (\boldsymbol u^{n+\theta}, p^{n+\theta}) \boldsymbol{ n}_{F} = -p_{in} (t^{n+\theta}) \boldsymbol{n}_{F} 
}
 & 
 \eqmath[r]{B}{ \mbox{on }  \Gamma_{F}^{in},}
 \label{BEfluidd}
 \\
 &
  \eqmath[l]{A}{ \boldsymbol {\sigma}_F (\boldsymbol u^{n+\theta}, p^{n+\theta}) \boldsymbol{ n}_{F} = -p_{out} (t^{n+\theta}) \boldsymbol{n}_{F}
}
 & 
 \eqmath[r]{B}{ \mbox{on }  \Gamma_{F}^{out}.}
 \label{BEfluide}
\end{empheq}
\label{eq:Dflow+1/2}
\end{subequations}
\end{step}
\begin{step}
Now evaluate the following (equivalent to solving \textbf{Forward Euler} problems):
\begin{subequations}
\begin{empheq}[left= \ftext{2.2em}{Solid:}  \;  {\empheqlbrace} \quad]{alignat=2}
&  \eqmath[l]{A}{{\boldsymbol \eta}^{n+1} 
= \frac{1}{\theta} {\boldsymbol \eta}^{n+\theta} - \frac{1-\theta}{\theta}{\boldsymbol \eta}^{n} }
&
\eqmath[r]{B}{ 
\mbox{ in }\Omega_S,}
\label{FEextraa}
\\
&
 \eqmath[l]{A}{
{\boldsymbol \xi}^{n+1} 
= 
\frac{1}{\theta} {\boldsymbol \xi}^{n+\theta} - \frac{1-\theta}{\theta}{\boldsymbol \xi}^{n}
}
&
\eqmath[r]{B}{ 
\mbox{ in }\Omega_S,}
\label{FEextrab}
\end{empheq}
\label{eq:Dstrct+1=timefilter}
\end{subequations}
\begin{subequations}
\label{eq:Dflow+1=timefilter}
\begin{empheq}[left= \ftext{2.2em}{Fluid:}  \;  {\empheqlbrace} \quad]{alignat=2}
& 
\eqmath[l]{A}{
{\boldsymbol u}^{n+1} = \frac{1}{\theta} {\boldsymbol u}^{n+\theta} - \frac{1-\theta}{\theta} {\boldsymbol u}^{n}}
&
\eqmath[r]{B}{ \mbox{ in }\Omega_F,}
\end{empheq}
\end{subequations}
\end{step}
Set $n=n+1$, and go back to Step 1.
\end{algorithm}
\begin{remark}
\label{FE=extra}
From a computational viewpoint, the bulk of the work in Algorithm \ref{algorithm1} is performed in the BE steps \eqref{eq:Dstrctr+1/2--kappa}-\eqref{eq:Dflow+1/2--kappa},
as the FE steps \eqref{eq:Dstrct+1=timefilter}-\eqref{eq:Dflow+1=timefilter}, written as linear extrapolations, act as time-filters. 
For the theoretical argumentation, we will use their equivalent  FE form:
\begin{subequations}
\begin{empheq}[left= \ftext{2.2em}{Solid:} \;  {\empheqlbrace} \quad]{alignat=2}
&
\eqmath[l]{A}{
\frac{{\boldsymbol \eta}^{n+1} - {\boldsymbol \eta}^{n+\theta}}{(1-\theta)\tau} 
= {\boldsymbol \xi}^{n+\theta}} 
&
\eqmath[r]{B}{ \mbox{ in }\Omega_S
,}
\label{FEsolida}
\\
&
\eqmath[l]{A}{
 \rho_S \frac{\boldsymbol{\xi}^{n+1} - \boldsymbol{\xi}^{n+\theta}}{(1-\theta) \tau} = \nabla \cdot \boldsymbol{\sigma}_S(\boldsymbol{\eta}^{n+\theta})}
&
\eqmath[r]{B}{ \mbox{ in }\Omega_S
,}
\label{FEsolidb}
\end{empheq}
\label{eq:Dstrctr+1}
\end{subequations}
\begin{empheq}[left= \ftext{2.2em}{Fluid:} \;  {\empheqlbrace} \quad]{alignat=2}
&
\eqmath[l]{A}{\rho_F \frac{{\boldsymbol u}^{n+1} - {\boldsymbol u}^{n+\theta}}{(1-\theta)\tau } 
- \nabla\cdot {\boldsymbol \sigma} ({\boldsymbol u}^{n+\theta} , p^{n+\theta})}
\notag
\\
&
 \eqmath[l]{A}{
 \qquad 
= \boldsymbol{f}_F(t^{n+\theta}) }
&
\eqmath[r]{B}{ \mbox{ in }\Omega_F.}
\label{FEfluida}
\end{empheq}
\end{remark}

\begin{remark}
The BE part of Algorithm~\ref{algorithm1} is obtained by sub-iterating a loosely-coupled method proposed in~\cite{paper1}, which was shown to be unconditionally stable without sub-iterating between the fluid and structure sub-problems. However, the accuracy of the method was shown to be only $\mathcal{O}(\tau^{\frac12})$. The results obtained using Algorithm~\ref{algorithm1} and  a loosely-coupled method proposed in~\cite{paper1} are compared in Example 2 in Section~\ref{numerics}.

\end{remark}

\section{Convergence of the partitioned iterative method}
In this section, we show that the iterative method defined by~\eqref{eq:Dstrctr+1/2--kappa}-\eqref{eq:Dflow+1/2--kappa} converges. 
In the following, we will use the   polarized identity given by:
\begin{align}
2(a-c)b = a^2-c^2-(a-b)^2+(b-c)^2
\label{polarization}
\end{align}

\begin{theorem}
The sequences ${\boldsymbol u}^{n+\theta}_{(\kappa)}, {\boldsymbol \eta}^{n+\theta}_{(\kappa)} , {\boldsymbol \xi}^{n+\theta}_{(\kappa)} $
generated by the iterations \eqref{eq:Dstrctr+1/2--kappa}-\eqref{eq:Dflow+1/2--kappa} converge as $\kappa\rightarrow\infty$:
\begin{align*}
&
{\boldsymbol u}^{n+\theta}_{(\kappa)} \longrightarrow {\boldsymbol u}^{n+\theta}
\qquad\mbox{in }
\ell^\infty(H^1(\Gamma)) \cap \ell^2(L^2(\Gamma)) \cap \ell^2(H^1(\Omega_F)),
\\
& {\boldsymbol \eta}^{n+\theta}_{(\kappa)} \longrightarrow {\boldsymbol \eta}^{n+\theta}
\qquad\mbox{in }
\ell^2(S) , 
\\
& {\boldsymbol \xi}^{n+\theta}_{(\kappa)} \longrightarrow {\boldsymbol \xi}^{n+\theta}
\qquad\mbox{in }
\ell^2(L^2(\Omega_S)) \cap \ell^2(L^2(\Gamma))
.
\end{align*}
\end{theorem}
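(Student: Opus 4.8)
The plan is to run an energy estimate on the \emph{errors} between the sub-iterates and the solution of the coupled Backward Euler problem~\eqref{eq:Dstrctr+1/2}--\eqref{eq:Dflow+1/2}, exhibiting a nonnegative ``iteration energy'' supported on $\Gamma$ that is monotonically non-increasing in $\kappa$ and whose decrement controls all the norms in the statement.

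First I would observe that the monolithic Backward Euler system~\eqref{eq:Dstrctr+1/2}--\eqref{eq:Dflow+1/2} is a well-posed linear stationary problem: it couples through $\Gamma$ a Stokes problem with the coercive mass term $\rho_F/(\theta\tau)$ to a coercive elastostatics problem (the kinematic relation~\eqref{BEsolida} turning the solid part into $\tfrac{\rho_S}{(\theta\tau)^2}\boldsymbol\eta^{n+\theta}-\nabla\cdot\boldsymbol\sigma_S(\boldsymbol\eta^{n+\theta})=\cdots$), so Lax--Milgram together with the Stokes inf--sup gives a unique solution $(\boldsymbol u^{n+\theta},p^{n+\theta},\boldsymbol\xi^{n+\theta},\boldsymbol\eta^{n+\theta})$; using~\eqref{BEfluidc} and~\eqref{BEsolidc} one checks at once that this solution is a fixed point of~\eqref{eq:Dstrctr+1/2--kappa}--\eqref{eq:Dflow+1/2--kappa}. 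By linearity the errors $\widetilde{\boldsymbol u}_{(\kappa)}:=\boldsymbol u^{n+\theta}_{(\kappa)}-\boldsymbol u^{n+\theta}$, and analogously $\widetilde{\boldsymbol\eta}_{(\kappa)},\widetilde{\boldsymbol\xi}_{(\kappa)},\widetilde p_{(\kappa)}$, satisfy the homogeneous versions of~\eqref{eq:Dstrctr+1/2--kappa}--\eqref{eq:Dflow+1/2--kappa}: $\widetilde{\boldsymbol\eta}_{(\kappa+1)}=\theta\tau\,\widetilde{\boldsymbol\xi}_{(\kappa+1)}$ in $\Omega_S$, no forcing, homogeneous tractions at the inlet/outlet, and the two Robin conditions on $\Gamma$ with right-hand sides carried over from the previous sub-iterate.

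Next I would test the homogeneous solid momentum equation with $\widetilde{\boldsymbol\xi}_{(\kappa+1)}$, use the kinematic relation to turn the elastic term into $\tfrac1{\theta\tau}\|\widetilde{\boldsymbol\eta}_{(\kappa+1)}\|_S^2$, integrate by parts using the traction-free $\Gamma_S^{ext}$ condition and the vanishing trace on $\Gamma_S^{in}\cup\Gamma_S^{out}$, and substitute the solid Robin condition on $\Gamma$, obtaining
\[
\tfrac{\rho_S}{\theta\tau}\|\widetilde{\boldsymbol\xi}_{(\kappa+1)}\|_{L^2(\Omega_S)}^2+\tfrac1{\theta\tau}\|\widetilde{\boldsymbol\eta}_{(\kappa+1)}\|_S^2+\alpha\|\widetilde{\boldsymbol\xi}_{(\kappa+1)}\|_{L^2(\Gamma)}^2=\int_\Gamma\big(\alpha\widetilde{\boldsymbol u}_{(\kappa)}-\boldsymbol\sigma_F(\widetilde{\boldsymbol u}_{(\kappa)},\widetilde p_{(\kappa)})\boldsymbol n_F\big)\cdot\widetilde{\boldsymbol\xi}_{(\kappa+1)}.
\]
Testing the homogeneous fluid momentum equation with $\widetilde{\boldsymbol u}_{(\kappa+1)}$, dropping the pressure by incompressibility, integrating by parts with the traction-free inlet/outlet conditions, and substituting the fluid Robin condition on $\Gamma$, gives
\[
\tfrac{\rho_F}{\theta\tau}\|\widetilde{\boldsymbol u}_{(\kappa+1)}\|_{L^2(\Omega_F)}^2+2\mu_F\|\boldsymbol D(\widetilde{\boldsymbol u}_{(\kappa+1)})\|_{L^2(\Omega_F)}^2+\alpha\|\widetilde{\boldsymbol u}_{(\kappa+1)}\|_{L^2(\Gamma)}^2=\alpha\int_\Gamma\widetilde{\boldsymbol\xi}_{(\kappa+1)}\cdot\widetilde{\boldsymbol u}_{(\kappa+1)}+\int_\Gamma\boldsymbol\sigma_F(\widetilde{\boldsymbol u}_{(\kappa)},\widetilde p_{(\kappa)})\boldsymbol n_F\cdot\widetilde{\boldsymbol u}_{(\kappa+1)}.
\]
I would then add these two identities, use the fluid Robin condition once more as $\widetilde{\boldsymbol u}_{(\kappa+1)}-\widetilde{\boldsymbol\xi}_{(\kappa+1)}=\tfrac1\alpha\big(\boldsymbol\sigma_F(\widetilde{\boldsymbol u}_{(\kappa)},\widetilde p_{(\kappa)})\boldsymbol n_F-\boldsymbol\sigma_F(\widetilde{\boldsymbol u}_{(\kappa+1)},\widetilde p_{(\kappa+1)})\boldsymbol n_F\big)$ on $\Gamma$, and apply the polarization identity~\eqref{polarization} to the interface cross terms; after regrouping, all boundary contributions collapse into the telescoping relation
\[
A_{(\kappa+1)}+B_{(\kappa+1)}+\tfrac\alpha2\|\widetilde{\boldsymbol\xi}_{(\kappa+1)}-\widetilde{\boldsymbol u}_{(\kappa)}\|_{L^2(\Gamma)}^2+\mathcal E_{(\kappa+1)}=\mathcal E_{(\kappa)},
\]
where $A_{(\kappa+1)}$ and $B_{(\kappa+1)}$ are the nonnegative solid and fluid bulk terms above and $\mathcal E_{(\kappa)}:=\tfrac\alpha2\|\widetilde{\boldsymbol u}_{(\kappa)}\|_{L^2(\Gamma)}^2+\tfrac1{2\alpha}\|\boldsymbol\sigma_F(\widetilde{\boldsymbol u}_{(\kappa)},\widetilde p_{(\kappa)})\boldsymbol n_F\|_{L^2(\Gamma)}^2\ge0$.

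Finally, summing over $\kappa$ yields that $\mathcal E_{(\kappa)}$ is non-increasing and $\sum_{\kappa\ge0}\big(A_{(\kappa+1)}+B_{(\kappa+1)}+\tfrac\alpha2\|\widetilde{\boldsymbol\xi}_{(\kappa+1)}-\widetilde{\boldsymbol u}_{(\kappa)}\|_{L^2(\Gamma)}^2\big)\le\mathcal E_{(0)}<\infty$. From the summability of $A_{(\kappa)}$ we get $\widetilde{\boldsymbol\eta}_{(\kappa)}\to0$ in $\ell^2(S)$ and $\widetilde{\boldsymbol\xi}_{(\kappa)}\to0$ in $\ell^2(L^2(\Omega_S))$; from that of $B_{(\kappa)}$, Korn's inequality gives $\widetilde{\boldsymbol u}_{(\kappa)}\to0$ in $\ell^2(H^1(\Omega_F))$, and then the trace inequality $H^1(\Omega_F)\hookrightarrow L^2(\Gamma)$ combined with the summability of $\|\widetilde{\boldsymbol\xi}_{(\kappa+1)}-\widetilde{\boldsymbol u}_{(\kappa)}\|_{L^2(\Gamma)}^2$ gives $\widetilde{\boldsymbol u}_{(\kappa)},\widetilde{\boldsymbol\xi}_{(\kappa)}\to0$ in $\ell^2(L^2(\Gamma))$. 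For the $\ell^\infty(H^1(\Gamma))$ bound the monotonicity of $\mathcal E_{(\kappa)}$ only furnishes a uniform $L^2(\Gamma)$ bound on the interface stress, so there I would invoke elliptic regularity for the fluid Stokes/Robin sub-problem on the smooth domain (using that $\widetilde{\boldsymbol\xi}_{(\kappa)}|_\Gamma$ is uniformly bounded in $H^{1/2}(\Gamma)$, as the trace of the $S$-bounded $\widetilde{\boldsymbol\eta}_{(\kappa)}$) to gain $H^{3/2}(\Omega_F)$ and hence the uniform $H^1(\Gamma)$ trace bound. The main obstacle is the third step --- identifying the correct interface energy $\mathcal E_{(\kappa)}$ and carrying out the polarization bookkeeping so that the many interface terms cancel \emph{exactly} into a monotone telescoping quantity; a secondary difficulty is the $\ell^\infty(H^1(\Gamma))$ claim, which the energy identity alone cannot deliver and which needs the elliptic-regularity argument just described.
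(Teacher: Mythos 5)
Your proposal is correct in its core mechanism and uses exactly the same energy bookkeeping as the paper: test the solid momentum equation with the velocity increment and the fluid momentum/continuity pair with the velocity/pressure increments, substitute the two Robin conditions, and apply the polarization identity~\eqref{polarization} so that the interface terms collapse into the monotone quantity $\tfrac{\alpha}{2}\|\cdot\|^2_{L^2(\Gamma)}+\tfrac{1}{2\alpha}\|\boldsymbol\sigma_F(\cdot)\boldsymbol n_F\|^2_{L^2(\Gamma)}$, which telescopes under summation in $\kappa$. The genuine difference is in \emph{what} you estimate: you work with the error $\widetilde{\boldsymbol u}_{(\kappa)}=\boldsymbol u^{n+\theta}_{(\kappa)}-\boldsymbol u^{n+\theta}$ relative to the fixed point of the coupled Backward Euler problem~\eqref{eq:Dstrctr+1/2}--\eqref{eq:Dflow+1/2} (which forces you to first prove well-posedness of that coupled problem via Lax--Milgram and the inf--sup condition), whereas the paper subtracts consecutive sub-iterates $\kappa$ and $\kappa+1$, shows the increments are square-summable, and appeals to completeness. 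Your route costs the preliminary well-posedness argument but buys a cleaner endgame: square-summability of $\|\widetilde{\boldsymbol u}_{(\kappa)}\|^2$ directly gives convergence to the (already identified) limit, whereas square-summable consecutive increments do not by themselves make a sequence Cauchy, so the paper's concluding step implicitly relies on an additional argument that your version does not need. You also correctly observe that the energy identity only furnishes $\ell^\infty(L^2(\Gamma))$ control of the interface velocity and stress, and you supply an elliptic-regularity argument to reach the stated $\ell^\infty(H^1(\Gamma))$ bound; the paper asserts that space without justification, so on this point your write-up is the more careful of the two, though the regularity step is only sketched and would need the smoothness of $\Gamma$ and the Robin data to be tracked explicitly.
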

\begin{proof}
We begin by subtracting  \eqref{eq:Dstrctr+1/2--kappa}-\eqref{eq:Dflow+1/2--kappa} at iteration $\kappa$ 
from the same equations at iteration $\kappa+1$. Using notation
\begin{align}
&
\boldsymbol \delta^{\eta}_{\kappa+1}  =  {\boldsymbol \eta}^{n+\theta}_{(\kappa+1)} -{\boldsymbol \eta}^{n+\theta}_{(\kappa)},
\notag \\
&
\boldsymbol \delta^{\xi}_{\kappa+1}  =  {\boldsymbol \xi}^{n+\theta}_{(\kappa+1)} -{\boldsymbol \xi}^{n+\theta}_{(\kappa)},
\notag \\
&
\boldsymbol \delta^{u}_{\kappa+1}  =  {\boldsymbol u}^{n+\theta}_{(\kappa+1)} -{\boldsymbol u}^{n+\theta}_{(\kappa)},
\notag \\
&
 \delta^{p}_{\kappa+1}  =  {p}^{n+\theta}_{(\kappa+1)} -{p}^{n+\theta}_{(\kappa)},
\end{align}
we obtain the following:
\begin{subequations}
\begin{empheq}[left= \ftext{2.2em}{Solid:}  \;  {\empheqlbrace} \quad]{alignat=2}
&  \eqmath[l]{A}{
     \frac{\boldsymbol \delta^{\eta}_{\kappa+1}}{\theta \tau} 
= 
\boldsymbol \delta^{\xi}_{\kappa+1}}
&
\eqmath[r]{B}{ \mbox{ in }\Omega_S,}
\label{loopSBE1a}
    \\
    & 
    \eqmath[l]{A}{ \rho_S \frac{\boldsymbol \delta^{\xi}_{\kappa+1}}{\theta \tau} = \nabla \cdot \boldsymbol{\sigma}_S(\boldsymbol \delta^{\eta}_{\kappa+1})}
&
\eqmath[r]{B}{ \mbox{ in }\Omega_S,}
\label{loopSBE1b}
    \\
    &
 \eqmath[l]{A}{\alpha \boldsymbol \delta^{\xi}_{\kappa+1} + \boldsymbol\sigma_S(\boldsymbol \delta^{\eta}_{\kappa+1})\boldsymbol{n}_S 
= \alpha \boldsymbol \delta^{u}_{\kappa} }
&
\notag
\\
&
 \eqmath[l]{A}{\qquad - \boldsymbol\sigma_F(\boldsymbol \delta^{u}_{\kappa},  \delta^{p}_{\kappa})\boldsymbol{n}_F}
& 
\eqmath[r]{B}{\mbox{ on }\Gamma,}
\label{loopSBE1c}
\\
& \eqmath[l]{A}{\boldsymbol \delta^{\eta}_{\kappa+1} = 0}
 & 
 \eqmath[r]{B}{\mbox{ on } \Gamma^{in}_S \cup \Gamma^{out}_S,}
\label{loopSBE1d}
\\
    &
  \eqmath[l]{A}{{\boldsymbol \sigma_S (\boldsymbol \delta^{\eta}_{\kappa+1})}  {\boldsymbol{ n}}_S =  0 }
    & \eqmath[r]{B}{\mbox{ on } {\Gamma}_S^{ext},}
    \label{loopSBE1e}
\end{empheq}
\end{subequations}
\begin{subequations}
\begin{empheq}[left= \ftext{2.2em}{Fluid:}  \;  {\empheqlbrace} \quad]{alignat=2}
&  \eqmath[l]{A}{
\rho_F \frac{\boldsymbol \delta^{u}_{\kappa+1}}{\theta \tau} 
- \nabla\cdot {\boldsymbol \sigma}_F (\boldsymbol \delta^{u}_{\kappa+1},  \delta^{p}_{\kappa+1} )
= 0 }
&
 \eqmath[r]{B}{\mbox{ in }\Omega_F,}
\label{loopFBE1a}
\\
& 
\eqmath[l]{A}{
\nabla \cdot \boldsymbol \delta^{u}_{\kappa+1}= 0}
&
 \eqmath[r]{B}{\mbox{ in }\Omega_F,}
\label{loopFBE1b}
\\
&
\eqmath[l]{A}{
\alpha \boldsymbol \delta^{u}_{\kappa+1}
 - \boldsymbol\sigma_F(\boldsymbol \delta^{u}_{\kappa},  \delta^{p}_{\kappa}) \boldsymbol{n}_F}
 \notag
\\
&
\eqmath[l]{A}{\qquad 
 = \alpha \boldsymbol \delta^{\xi}_{\kappa+1}
  -\boldsymbol\sigma_F(\boldsymbol \delta^{u}_{\kappa+1},  \delta^{p}_{\kappa+1}) \boldsymbol{n}_F}
&
 \eqmath[r]{B}{\mbox{ on }\Gamma,}
\label{loopFBE1c}
\\
&
 \eqmath[l]{A}{ \boldsymbol {\sigma}_F(\boldsymbol \delta^{u}_{\kappa+1},  \delta^{p}_{\kappa+1}) \boldsymbol{ n}_{F} = 0
}
 & 
 \eqmath[r]{B}{ \mbox{on }  \Gamma_{F}^{in},}
 \\
 &
  \eqmath[l]{A}{ \boldsymbol {\sigma}_F(\boldsymbol \delta^{u}_{\kappa+1},  \delta^{p}_{\kappa+1})  \boldsymbol{ n}_{F} = 0
}
 & 
 \eqmath[r]{B}{ \mbox{on }  \Gamma_{F}^{out}.}
\end{empheq}
\end{subequations}
We  multiply~\eqref{loopSBE1b} by  $\boldsymbol \delta^{\xi}_{\kappa+1}$ and  integrate over $\Omega_S$. Using~\eqref{loopSBE1a} and~\eqref{sNorm}, we have:
\begin{align}
0&=\frac{\rho_S}{\theta \tau} \left\| \boldsymbol \delta^{\xi}_{\kappa+1} \right\|_{L^2(\Omega_S)}^2
+ \frac{1}{\theta \tau}  \left\| \boldsymbol \delta^{\eta}_{\kappa+1} \right\|_{S}^2 
- \int_{\Gamma}  \boldsymbol\sigma_S(\boldsymbol \delta^{\eta}_{\kappa+1}) \boldsymbol{n}_S 
\cdot \boldsymbol \delta^{\xi}_{\kappa+1}.
\notag
\end{align}
Using condition~\eqref{loopSBE1c} and identity~\eqref{polarization}, we have:
\begin{align}
0&=\frac{\rho_S}{\theta \tau} \left\| \boldsymbol \delta^{\xi}_{\kappa+1} \right\|_{L^2(\Omega_S)}^2
+ \frac{1}{\theta \tau}  \left\| \boldsymbol \delta^{\eta}_{\kappa+1} \right\|_{S}^2 
+\frac{  \alpha}{2} \left\| \boldsymbol \delta^{\xi}_{\kappa+1} \right\|^2_{L^2(\Gamma)} 
 -\frac{  \alpha}{2} \left\| \boldsymbol \delta^{u}_{\kappa} \right\|^2_{L^2(\Gamma)}
 \notag \\
 &
  +\frac{  \alpha}{2} \left\| \boldsymbol \delta^{\xi}_{\kappa+1} -\boldsymbol \delta^{u}_{\kappa} \right\|^2_{L^2(\Gamma)}
+\int_{\Gamma} \boldsymbol\sigma_F(\boldsymbol \delta^{u}_{\kappa}, \delta^{p}_{\kappa})\boldsymbol{n}_F
\cdot \boldsymbol \delta^{\xi}_{\kappa+1}.
\label{loopSBE}
\end{align}
We address the fluid in a similar manner. Multiplying~\eqref{loopFBE1a} by $\boldsymbol \delta^{u}_{\kappa+1}$,~\eqref{loopFBE1b} by $\boldsymbol \delta^{p}_{\kappa+1}$, integrating over $\Omega_F$ and adding the resulting equations together, we obtain:
\begin{align}
0=& \frac{\rho_F}{\theta \tau} \left\| \boldsymbol \delta^{u}_{\kappa+1} \right\|^2_{L^2(\Omega_F)} 
+2 \mu_F \left\| \boldsymbol{D}(\boldsymbol \delta^{u}_{\kappa+1}) \right\|_{L^2(\Omega_F)}^2
-\int_{\Gamma} \boldsymbol\sigma_F (\boldsymbol \delta^{u}_{\kappa+1},  \delta^{p}_{\kappa+1})\boldsymbol{n}_F  \cdot \boldsymbol \delta^{u}_{\kappa+1}.
\notag
\end{align}
Using Robin condition~\eqref{loopFBE1c} and identity~\eqref{polarization}, we have:
\begin{align}
0=& \frac{\rho_F}{\theta \tau} \left\| \boldsymbol \delta^{u}_{\kappa+1}  \right\|^2_{L^2(\Omega_F)} 
+2 \mu_F \left\| \boldsymbol{D}(\boldsymbol \delta^{u}_{\kappa+1} )\right\|_{L^2(\Omega_F)}^2
+\frac{\alpha}{2} \left\| \boldsymbol \delta^{u}_{\kappa+1}  \right\|^2_{L^2(\Gamma)}
\nonumber \\
&
-\frac{\alpha}{2} \left\| \boldsymbol \delta^{\xi}_{\kappa+1} \right\|^2_{L^2(\Gamma)} 
+\frac{\alpha}{2} \left\| \boldsymbol \delta^{u}_{\kappa+1}  
-\boldsymbol \delta^{\xi}_{\kappa+1} \right\|^2_{L^2(\Gamma)} 
\notag \\
& -\int_{\Gamma}  \boldsymbol\sigma_F(\boldsymbol \delta^{u}_{\kappa},  \delta^{p}_{\kappa}) \boldsymbol{n}_F \cdot \boldsymbol \delta^{u}_{\kappa+1}.
\label{loopFBE}
\end{align}
Combining structure~\eqref{loopSBE} and fluid~\eqref{loopFBE} estimates, we obtain:
\begin{align}
0&=\frac{\rho_S}{\theta \tau} \left\|  \boldsymbol \delta^{\xi}_{\kappa+1} \right\|_{L^2(\Omega_S)}^2
+ \frac{1}{\theta \tau}  \left\| \boldsymbol \delta^{\eta}_{\kappa+1}   \right\|_{S}^2 
+\frac{\rho_F}{\theta \tau} \left\|\boldsymbol \delta^{u}_{\kappa+1}   \right\|^2_{L^2(\Omega_F)} 
\notag \\
& 
+2 \mu_F \left\| \boldsymbol{D}(\boldsymbol \delta^{u}_{\kappa+1}  )\right\|_{L^2(\Omega_F)}^2
+\frac{\alpha}{2} \left\| \boldsymbol \delta^{u}_{\kappa+1}  \right\|^2_{L^2(\Gamma)}
-\frac{  \alpha}{2} \left\| \boldsymbol \delta^{u}_{\kappa}  \right\|^2_{L^2(\Gamma)}
\notag \\
& 
  +\frac{  \alpha}{2} \left\| \boldsymbol \delta^{\xi}_{\kappa+1} -\boldsymbol \delta^{u}_{\kappa} \right\|^2_{L^2(\Gamma)}
+\frac{\alpha}{2} \left\| \boldsymbol \delta^{u}_{\kappa+1} -\boldsymbol \delta^{\xi}_{\kappa+1}  \right\|^2_{L^2(\Gamma)} 
\notag \\
&+\int_{\Gamma} \boldsymbol\sigma_F(\boldsymbol \delta^{u}_{\kappa} ,  \delta^{p}_{\kappa} )\boldsymbol{n}_F \cdot
 \left(\boldsymbol \delta^{\xi}_{\kappa+1}   -\boldsymbol \delta^{u}_{\kappa+1}  \right).
 \label{fs_est}
\end{align}
Using~\eqref{loopFBE1c} and~\eqref{polarization}, the last term can be written as:
\begin{align}
&\int_{\Gamma} \boldsymbol\sigma_F(\boldsymbol \delta^{u}_{\kappa} ,  \delta^{p}_{\kappa} )\boldsymbol{n}_F \cdot
 \left(\boldsymbol \delta^{\xi}_{\kappa+1}   -\boldsymbol \delta^{u}_{\kappa+1}  \right)
 \notag \\
 &=
 \frac{1}{2 \alpha } \left\|  \boldsymbol\sigma_F(\boldsymbol \delta^{u}_{\kappa+1} ,  \delta^{p}_{\kappa+1} ) \boldsymbol{n}_F \right\|^2_{L^2(\Gamma)}
 -\frac{1}{2 \alpha } \left\| \boldsymbol\sigma_F(\boldsymbol \delta^{u}_{\kappa} ,  \delta^{p}_{\kappa} ) \boldsymbol{n}_F \right\|^2_{L^2(\Gamma)}
 \notag \\
 &-\frac{1}{2 \alpha } \left\|  \boldsymbol\sigma_F(\boldsymbol \delta^{u}_{\kappa+1} ,  \delta^{p}_{\kappa+1} )\boldsymbol{n}_F
-\boldsymbol\sigma_F(\boldsymbol \delta^{u}_{\kappa} ,  \delta^{p}_{\kappa} ) \boldsymbol{n}_F \right\|^2_{L^2(\Gamma)}.
\label{stress_est}
\end{align}
Using~\eqref{loopFBE1c} again, and combining~\eqref{stress_est} with~\eqref{fs_est}, we obtain:
\begin{align}
0&=\frac{\rho_S}{\theta \tau} \left\|  \boldsymbol \delta^{\xi}_{\kappa+1}  \right\|_{L^2(\Omega_S)}^2
+ \frac{1}{\theta \tau}  \left\| \boldsymbol \delta^{\eta}_{\kappa+1}  \right\|_{S}^2 
+\frac{\rho_F}{\theta \tau} \left\| \boldsymbol \delta^{u}_{\kappa+1}   \right\|^2_{L^2(\Omega_F)} 
\notag \\
& 
+2 \mu_F \left\| \boldsymbol{D}(\boldsymbol \delta^{u}_{\kappa+1} )\right\|_{L^2(\Omega_F)}^2
+\frac{\alpha}{2} \left\| \boldsymbol \delta^{u}_{\kappa+1}  \right\|^2_{L^2(\Gamma)}
-\frac{  \alpha}{2} \left\| \boldsymbol \delta^{u}_{\kappa} \right\|^2_{L^2(\Gamma)}
\notag \\
& 
  +\frac{  \alpha}{2} \left\| \boldsymbol \delta^{\xi}_{\kappa+1}  -\boldsymbol \delta^{u}_{\kappa} \right\|^2_{L^2(\Gamma)}
+ \frac{1}{2\alpha } \left\|  \boldsymbol\sigma_F(\boldsymbol \delta^{u}_{\kappa+1} ,   \delta^{p}_{\kappa+1} ) \boldsymbol{n}_F \right\|^2_{L^2(\Gamma)}
\notag 
\\
&
 -\frac{1}{2\alpha } \left\| \boldsymbol\sigma_F(\boldsymbol \delta^{u}_{\kappa} ,  \delta^{p}_{\kappa} )\boldsymbol{n}_F \right\|^2_{L^2(\Gamma)}.
\end{align}
Summing from $\kappa=1$ to $l-1$, we get:
\begin{align}
& \frac{\rho_S}{\theta \tau} \sum_{\kappa=1}^{l-1} \left\|   \boldsymbol \delta^{\xi}_{\kappa+1} \right\|_{L^2(\Omega_S)}^2 
+ \frac{1}{\theta \tau}   \sum_{\kappa=1}^{l-1} \left\|  \boldsymbol \delta^{\eta}_{\kappa+1} \right\|_{S}^2 
+\frac{\rho_F}{\theta \tau} \sum_{\kappa=1}^{l-1} \left\|  \boldsymbol \delta^{u}_{\kappa+1} \right\|^2_{L^2(\Omega_F)} 
\nonumber\\
&+2 \mu_F \sum_{\kappa=1}^{l-1} \left\| \boldsymbol{D}( \boldsymbol \delta^{u}_{\kappa+1}) \right\|_{L^2(\Omega_F)}^2
  +\frac{  \alpha}{2}  \sum_{\kappa=1}^{l-1} \left\| \boldsymbol \delta^{\xi}_{\kappa+1} - \boldsymbol \delta^{u}_{\kappa} \right\|^2_{L^2(\Gamma)}
 \nonumber \\
&+ \frac{\alpha}{2} \left\|
 \boldsymbol \delta^{u}_{l} \right\|^2_{L^2(\Gamma)}
+ \frac{1}{2\alpha } \left\|  \boldsymbol\sigma_F( \boldsymbol \delta^{u}_{l},   \delta^{p}_{l}) \boldsymbol{n}_F \right\|^2_{L^2(\Gamma)}
 \nonumber \\
&=
\frac{\alpha}{2} \left\|
 \boldsymbol \delta^{u}_{1} \right\|^2_{L^2(\Gamma)}
+ \frac{1}{2\alpha } \left\|  \boldsymbol\sigma_F( \boldsymbol \delta^{u}_{1},  \delta^{p}_{1}) \boldsymbol{n}_F \right\|^2_{L^2(\Gamma)}.
\end{align}

Hence, $\boldsymbol{\eta}_{(\kappa)}^{n+\theta}$ and $\boldsymbol{\xi}_{(\kappa)}^{n+\theta}$, and $\boldsymbol{u}_{(\kappa)}^{n+\theta}$ are Cauchy sequences in $\ell^2(S), \ell^2(L^2(\Omega_S)) \cap \ell^2(L^2(\Gamma))$ and $\ell^{\infty}(H^1(\Gamma))\cap \ell^{2}(L^2(\Gamma)) \cap \ell^{2}(H^1(\Omega_F))$, respectively. The completeness of the spaces implies the convergence of the
iterations, completing the proof.
\end{proof}

\section{Stability Analysis}
In this section, we prove the stability of the partitioned method presented in Algorithm~\ref{algorithm1}. In particular, we consider the scheme described by the BE steps~\eqref{eq:Dstrctr+1/2}-\eqref{eq:Dflow+1/2} and FE steps~\eqref{eq:Dstrctr+1}-\eqref{FEfluida}. As noted in Remark~\ref{FE=extra}, the FE steps are equivalent to linear extrapolations~\eqref{eq:Dstrct+1=timefilter}-\eqref{eq:Dflow+1=timefilter}. 

Let $\mathcal{E}^n$ denote the sum of the kinetic and elastic energy of the solid, and kinetic energy of the fluid, defined as:
\begin{align*}
\mathcal{E}^n = \frac{\rho_S}{2 }  \|   \boldsymbol{\xi}^{n}\|_{L^2(\Omega_S)}^2
+\frac{1}{2 }  \| \boldsymbol{\eta}^{n} \|_{S}^2
+\frac{\rho_F}{2 }  \|\boldsymbol{u}^{n} \|_{L^2(\Omega_F)}^2,
\end{align*}
let $\mathcal{D}^n$ denote the fluid viscous dissipation, given by:
\begin{align*}
\mathcal{D}^n=\mu_F \tau \sum_{k=2}^{n-1}\|\boldsymbol{D}(\boldsymbol{u}^{k+\theta}) \|_{L^2(\Omega_F)}^2, 
\end{align*}
 let $\mathcal{N}^n$ denote the terms present due to numerical dissipation:
\begin{align*}
\mathcal{N}^n = &
\frac{ (2 \theta -1)}{2 \tau} \sum_{k=2}^{n-1} \left( 
\rho_S  \left\|   \boldsymbol{\xi}^{k+1} -\boldsymbol{\xi}^k  \right\|_{L^2(\Omega_S)}^2
+ 
  \left\|   \boldsymbol{\eta}^{k+1} -\boldsymbol{\eta}^k  \right\|_{S}^2
\right)
\notag \\
&
+  \frac{\rho_F (2\theta-1)}{2 \tau} \sum_{k=2}^{n-1} \|\boldsymbol{u}^{k+1} - \boldsymbol{u}^{k}\|^2_{L^2(\Omega_F)},
\end{align*}
and let $\mathcal{F}^n$ denote the forcing terms:
\begin{align*}
\mathcal{F}^n =& \frac{ \tau}{\mu_F}\sum_{k=2}^{n-1} \left( C_1 \|  \boldsymbol f_F(t^{k+\theta})\|^2_{L^2(\Omega_F)}
+
C_2 \| p_{in}(t^{k+\theta}) \|^2_{L^2(\Gamma_F^{in})}
\right)
\notag
\\
&
+
\frac{ \tau}{\mu_F}\sum_{k=2}^{n-1} C_2\| p_{out}(t^{k+\theta}) \|^2_{L^2(\Gamma_F^{out})}
.
\end{align*}
The stability result is given in the following theorem. 
\begin{theorem}
Let $\{(\boldsymbol \xi^n, \boldsymbol \eta^n, \boldsymbol u^n, p^n)\}_{2 \leq n \leq N}$ be the solution of Algorithm~\ref{algorithm1}.
Assume that  $\theta \in [\frac12, 1]$. Then, the following estimate holds:
\begin{align}
&
\mathcal{E}^N+\mathcal{D}^N+\mathcal{N}^N 
\leq
\mathcal{E}^2
+\mathcal{F}^N.
\end{align}

\end{theorem}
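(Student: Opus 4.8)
The plan is to exploit the refactorization: composing the Backward Euler step \eqref{BEsolida}--\eqref{BEfluide} with the Forward Euler step \eqref{FEsolida}--\eqref{FEfluida} (equivalently, the extrapolations \eqref{FEextraa}--\eqref{FEextrab}) reproduces the one-leg `$\theta$-like' method. Solving \eqref{FEextraa}--\eqref{FEextrab}, together with ${\boldsymbol u}^{n+1}=\frac{1}{\theta}{\boldsymbol u}^{n+\theta}-\frac{1-\theta}{\theta}{\boldsymbol u}^{n}$, for the intermediate values gives ${\boldsymbol\eta}^{n+\theta}=\theta{\boldsymbol\eta}^{n+1}+(1-\theta){\boldsymbol\eta}^{n}$ and analogously for ${\boldsymbol\xi},{\boldsymbol u}$; adding $\theta\tau$ times each Backward Euler equation to $(1-\theta)\tau$ times the matching Forward Euler equation yields
\[
\frac{{\boldsymbol\eta}^{n+1}-{\boldsymbol\eta}^{n}}{\tau}={\boldsymbol\xi}^{n+\theta},\qquad \rho_S\frac{{\boldsymbol\xi}^{n+1}-{\boldsymbol\xi}^{n}}{\tau}=\nabla\cdot{\boldsymbol\sigma}_S({\boldsymbol\eta}^{n+\theta})\qquad\text{in }\Omega_S,
\]
\[
\rho_F\frac{{\boldsymbol u}^{n+1}-{\boldsymbol u}^{n}}{\tau}-\nabla\cdot{\boldsymbol\sigma}_F({\boldsymbol u}^{n+\theta},p^{n+\theta})={\boldsymbol f}_F(t^{n+\theta}),\qquad \nabla\cdot{\boldsymbol u}^{n+\theta}=0\qquad\text{in }\Omega_F,
\]
supplemented by the \emph{exact} interface conditions ${\boldsymbol u}^{n+\theta}={\boldsymbol\xi}^{n+\theta}$ and ${\boldsymbol\sigma}_S({\boldsymbol\eta}^{n+\theta}){\boldsymbol n}_S=-{\boldsymbol\sigma}_F({\boldsymbol u}^{n+\theta},p^{n+\theta}){\boldsymbol n}_F$ on $\Gamma$ coming from \eqref{BEsolidc} and \eqref{BEfluidc}, together with \eqref{BEsolidd}--\eqref{BEsolide} and \eqref{BEfluidd}--\eqref{BEfluide}. (The fact that the limit of the sub-iteration satisfies \eqref{BEsolidc}, \eqref{BEfluidc} exactly, rather than a Robin relation, is exactly what the convergence theorem of the previous section provides.)

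Next I test with the common ``midpoint'' fields. Multiplying \eqref{BEsolidb} by ${\boldsymbol\xi}^{n+\theta}$ and integrating over $\Omega_S$, then integrating by parts: the boundary term on $\Gamma_S^{in}\cup\Gamma_S^{out}$ vanishes since there ${\boldsymbol\xi}^{n+\theta}=\tau^{-1}({\boldsymbol\eta}^{n+1}-{\boldsymbol\eta}^{n})=0$ (by induction from \eqref{BEsolidd} and \eqref{FEextraa}), and the term on $\Gamma_S^{ext}$ vanishes by \eqref{BEsolide}. In the bulk I substitute ${\boldsymbol\xi}^{n+\theta}=\tau^{-1}({\boldsymbol\eta}^{n+1}-{\boldsymbol\eta}^{n})$ and ${\boldsymbol\eta}^{n+\theta}=\theta{\boldsymbol\eta}^{n+1}+(1-\theta){\boldsymbol\eta}^{n}$ and apply the polarized identity \eqref{polarization} once to $\|\cdot\|_{L^2(\Omega_S)}^2$ and once to the symmetric bilinear form generating $\|\cdot\|_S$ (see \eqref{sNorm}). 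This produces $\frac{\rho_S}{2\tau}\big(\|{\boldsymbol\xi}^{n+1}\|_{L^2(\Omega_S)}^2-\|{\boldsymbol\xi}^{n}\|_{L^2(\Omega_S)}^2\big)+\frac{1}{2\tau}\big(\|{\boldsymbol\eta}^{n+1}\|_S^2-\|{\boldsymbol\eta}^{n}\|_S^2\big)$ plus the $(2\theta-1)$-weighted increments $\frac{2\theta-1}{2\tau}\big(\rho_S\|{\boldsymbol\xi}^{n+1}-{\boldsymbol\xi}^{n}\|_{L^2(\Omega_S)}^2+\|{\boldsymbol\eta}^{n+1}-{\boldsymbol\eta}^{n}\|_S^2\big)$, the whole equal to $\int_\Gamma({\boldsymbol\sigma}_S({\boldsymbol\eta}^{n+\theta}){\boldsymbol n}_S)\cdot{\boldsymbol\xi}^{n+\theta}$. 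Analogously, multiplying \eqref{BEfluida} by ${\boldsymbol u}^{n+\theta}$, integrating over $\Omega_F$, integrating by parts, and using \eqref{BEfluidb} to drop the pressure term, \eqref{polarization} yields $\frac{\rho_F}{2\tau}\big(\|{\boldsymbol u}^{n+1}\|_{L^2(\Omega_F)}^2-\|{\boldsymbol u}^{n}\|_{L^2(\Omega_F)}^2\big)$, its $(2\theta-1)$-weighted increment, the dissipation $2\mu_F\|{\boldsymbol D}({\boldsymbol u}^{n+\theta})\|_{L^2(\Omega_F)}^2$, the interface term $-\int_\Gamma({\boldsymbol\sigma}_F({\boldsymbol u}^{n+\theta},p^{n+\theta}){\boldsymbol n}_F)\cdot{\boldsymbol u}^{n+\theta}$, and the fluid data terms involving ${\boldsymbol f}_F(t^{n+\theta})$, $p_{in}(t^{n+\theta})$, $p_{out}(t^{n+\theta})$.

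Now I add the two identities. The interface integrals cancel: using ${\boldsymbol u}^{n+\theta}={\boldsymbol\xi}^{n+\theta}$ and ${\boldsymbol\sigma}_S({\boldsymbol\eta}^{n+\theta}){\boldsymbol n}_S=-{\boldsymbol\sigma}_F({\boldsymbol u}^{n+\theta},p^{n+\theta}){\boldsymbol n}_F$ on $\Gamma$, the solid contribution equals $-\int_\Gamma({\boldsymbol\sigma}_F({\boldsymbol u}^{n+\theta},p^{n+\theta}){\boldsymbol n}_F)\cdot{\boldsymbol u}^{n+\theta}$, which is exactly opposite to the fluid one. Multiplying through by $\tau$ and recognizing $\mathcal E^{n+1}-\mathcal E^{n}$, I obtain the one-step balance
\begin{align*}
&\mathcal E^{n+1}-\mathcal E^{n}+\frac{2\theta-1}{2}\Big(\rho_S\|{\boldsymbol\xi}^{n+1}-{\boldsymbol\xi}^{n}\|_{L^2(\Omega_S)}^2+\|{\boldsymbol\eta}^{n+1}-{\boldsymbol\eta}^{n}\|_S^2+\rho_F\|{\boldsymbol u}^{n+1}-{\boldsymbol u}^{n}\|_{L^2(\Omega_F)}^2\Big)\\
&\qquad\qquad+2\mu_F\tau\|{\boldsymbol D}({\boldsymbol u}^{n+\theta})\|_{L^2(\Omega_F)}^2=\tau\,\mathcal R^n,
\end{align*}
where $\mathcal R^n$ collects the three fluid data terms. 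I bound $\mathcal R^n$ using Cauchy--Schwarz, the trace theorem on $\Gamma_F^{in}\cup\Gamma_F^{out}$, and Korn's and Poincar\'e's inequalities to dominate $\|{\boldsymbol u}^{n+\theta}\|$ in $L^2(\Omega_F)$ and on the inlet/outlet boundaries by $\|{\boldsymbol D}({\boldsymbol u}^{n+\theta})\|_{L^2(\Omega_F)}$; then Young's inequality absorbs one half of $2\mu_F\tau\|{\boldsymbol D}({\boldsymbol u}^{n+\theta})\|_{L^2(\Omega_F)}^2$ on the left, leaving the dissipation $\mu_F\tau\|{\boldsymbol D}({\boldsymbol u}^{n+\theta})\|_{L^2(\Omega_F)}^2$ and producing the forcing summand with constants $C_1,C_2$ depending only on $\Omega_F$ and the trace/Korn/Poincar\'e constants. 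Summing from $n=2$ to $N-1$, the energies telescope to $\mathcal E^N-\mathcal E^2$, the viscous and numerical-dissipation terms accumulate to $\mathcal D^N$ and $\mathcal N^N$, and the data terms to $\mathcal F^N$; since $\theta\in[\tfrac12,1]$ gives $2\theta-1\ge0$, one has $\mathcal N^N\ge0$, and rearranging yields $\mathcal E^N+\mathcal D^N+\mathcal N^N\le\mathcal E^2+\mathcal F^N$.

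The two genuinely delicate points are (i) that the limit of the Backward Euler sub-iteration satisfies the exact kinematic and dynamic coupling \eqref{BEsolidc}, \eqref{BEfluidc}, rather than merely the Robin relations used inside the loop---this is what the convergence theorem of the previous section supplies and what makes the interface integrals cancel identically---and (ii) the algebra in \eqref{polarization}, which must be applied with the ``midpoint'' value in the middle slot so that the surviving increment terms carry the factor $2\theta-1$ and are thus nonnegative precisely under the hypothesis $\theta\in[\tfrac12,1]$. The remaining ingredients---integration by parts, the trace theorem, Korn's and Young's inequalities---are routine.
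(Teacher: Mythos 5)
Your proposal is correct and follows essentially the same route as the paper: testing the scheme with the intermediate value $\boldsymbol{\xi}^{n+\theta}$, $\boldsymbol{u}^{n+\theta}$, using the polarized identity so that the surviving increments carry the factor $2\theta-1$, cancelling the interface integrals via the exact kinematic and dynamic conditions satisfied by the converged BE solution, bounding the data terms with trace/Korn/Poincar\'e/Young, and telescoping. The only cosmetic difference is that you first recombine the BE and FE steps into the one-leg form before testing, whereas the paper tests the BE and FE equations separately with weights $\theta$ and $1-\theta$ and adds; the algebra is identical.
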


\begin{proof}
We multiply~\eqref{BEsolidb} by $\theta \boldsymbol{\xi}^{n+\theta}$, integrate over $\Omega_S$, and use~\eqref{BEsolida} and~\eqref{polarization}, which yields:
\begin{align}
&0=  \frac{\rho_S}{2 \tau} \left(  \|\boldsymbol{\xi}^{n+\theta} \|_{L^2(\Omega_S)}^2 - \|  \boldsymbol{\xi}^{n} \|_{L^2(\Omega_S)}^2 + \|  \boldsymbol{\xi}^{n+\theta} -\boldsymbol{\xi}^n \|_{L^2(\Omega_S)}^2 \right) 
\nonumber \\
&+\frac{1}{2  \tau} \left( \| \boldsymbol{\eta}^{n+\theta}\|_{S}^2 - \|\boldsymbol{\eta}^{n} \|_{S}^2 + \| \boldsymbol{\eta}^{n+\theta} - \boldsymbol{\eta}^{n} \|_{S}^2 \right) 
 - \theta \int_{\Gamma} \boldsymbol\sigma_S(\boldsymbol{\eta}^{n+\theta}) \boldsymbol{n}_S  \boldsymbol{\xi}^{n+\theta}. \label{SBE}
\end{align}
Similarly, we multiply~\eqref{FEsolidb} by $(1-\theta) \boldsymbol{\xi}^{n+\theta}$, integrate over $\Omega_S$ and use~\eqref{FEsolida} and~\eqref{polarization} in order to obtain:
\begin{align}
0=& \frac{\rho_S}{2 \tau} \left( \|  \boldsymbol{\xi}^{n+1} \|_{L^2(\Omega_S)}^2-\|  \boldsymbol{\xi}^{n+\theta} \|_{L^2(\Omega_S)}^2  - \|  \boldsymbol{\xi}^{n+\theta} -\boldsymbol{\xi}^{n+1} \|_{L^2(\Omega_S)}^2 \right)
 \nonumber \\
&+\frac{1}{2 \tau} \left( 
 \| \boldsymbol{\eta}^{n+1} \|_{S}^2
 - \| \boldsymbol{\eta}^{n+\theta} \|_{S}^2
- \| \boldsymbol{\eta}^{n+\theta} - \boldsymbol{\eta}^{n+1} \|_{S}^2 
\right) 
\notag
\\
&
-(1-\theta) \int_{\Gamma} \boldsymbol\sigma_S(\boldsymbol{\eta}^{n+\theta}) \boldsymbol{n}_S \cdot \boldsymbol{\xi}^{n+\theta}.
\label{SFE}
\end{align}
Adding~\eqref{SBE} and~\eqref{SFE}, and using~\eqref{BEsolidc}, we have:
\begin{align}
0=&
\frac{\rho_S}{2\tau} \left( \|   \boldsymbol{\xi}^{n+1}\|_{L^2(\Omega_S)}^2 - \|  \boldsymbol{\xi}^{n} \|_{L^2(\Omega_S)}^2 + \|  \boldsymbol{\xi}^{n+\theta} -\boldsymbol{\xi}^n \|_{L^2(\Omega_S)}^2 \right)
\nonumber 
\\
&
- \frac{\rho_S}{2\tau}\|  \boldsymbol{\xi}^{n+\theta} -\boldsymbol{\xi}^{n+1} \|_{L^2(\Omega_S)}^2 
+\frac{1}{2 \tau} \left( 
 \| \boldsymbol{\eta}^{n+1} \|_{S}^2
 - \| \boldsymbol{\eta}^{n} \|_{S}^2
 + \| \boldsymbol{\eta}^{n+\theta} - \boldsymbol{\eta}^{n} \|_{S}^2 
 \right)
\nonumber 
\\
&-\frac{1}{2 \tau} 
 \| \boldsymbol{\eta}^{n+\theta} - \boldsymbol{\eta}^{n+1} \|_{S}^2 
 + \int_{\Gamma}  \boldsymbol\sigma_F(\boldsymbol{u}^{n+\theta},p^{n+\theta})\boldsymbol{n}_F \cdot \boldsymbol{\xi}^{n+\theta}.
   \label{SBEFEtemp}
\end{align}
Using~\eqref{FEextrab}, we have:
\begin{align}
&\|  \boldsymbol{\xi}^{n+\theta} -\boldsymbol{\xi}^n \|_{L^2(\Omega_S)}^2 - \|  \boldsymbol{\xi}^{n+\theta} -\boldsymbol{\xi}^{n+1} \|_{L^2(\Omega_S)}^2 
\notag \\
&\qquad 
=(2 \theta -1)  \left\|   \boldsymbol{\xi}^{n+1} -\boldsymbol{\xi}^n  \right\|_{L^2(\Omega_S)}^2,
\label{xiterms}
\end{align}
noting that $(2 \theta -1) \geq 0$ since $\theta \in [\frac12, 1]$.
Similarly, using~\eqref{FEextraa}, we can write:
\begin{align}
  \| \boldsymbol{\eta}^{n+\theta} - \boldsymbol{\eta}^{n} \|_{S}^2 
- \| \boldsymbol{\eta}^{n+\theta} - \boldsymbol{\eta}^{n+1} \|_{S}^2
= (2 \theta -1)  \left\|   \boldsymbol{\eta}^{n+1} -\boldsymbol{\eta}^n  \right\|_{S}^2.
 \label{etaterms}
\end{align}
Using~\eqref{xiterms} and~\eqref{etaterms}, the solid estimate~\eqref{SBEFEtemp} becomes:
\begin{align}
0=&
\frac{\rho_S}{2\tau} \left( \|   \boldsymbol{\xi}^{n+1}\|_{L^2(\Omega_S)}^2 - \|  \boldsymbol{\xi}^{n} \|_{L^2(\Omega_S)}^2 \right)
+\frac{\rho_S (2 \theta -1) }{2 \tau}  \left\|   \boldsymbol{\xi}^{n+1} -\boldsymbol{\xi}^n  \right\|_{L^2(\Omega_S)}^2
 \nonumber \\
&
+\frac{1}{2 \tau} \left( 
 \| \boldsymbol{\eta}^{n+1} \|_{S}^2
 - \| \boldsymbol{\eta}^{n} \|_{S}^2
 \right)
+  \frac{  (2\theta-1)}{2 \tau}   
  \left\|   \boldsymbol{\eta}^{n+1} -\boldsymbol{\eta}^n  \right\|_{S}^2
\notag
\\
&
 + \int_{\Gamma}  \boldsymbol\sigma_F(\boldsymbol{u}^{n+\theta},p^{n+\theta})\boldsymbol{n}_F \cdot \boldsymbol{\xi}^{n+\theta}.
   \label{SBEFE}
\end{align}
In a similar way, to derive an estimate for the fluid part we multiply~\eqref{BEfluida} by $\theta \boldsymbol{u}^{n+\theta}$,~\eqref{BEfluidb} by $ p^{n+\theta}$, and~\eqref{FEfluida} by $(1-\theta )\boldsymbol{u}^{n+\theta}$,  add together and integrate over $\Omega_F$, which results in:
\begin{align*}
&
\frac{\rho_F}{2 \tau} \left( \|\boldsymbol{u}^{n+1} \|_{L^2(\Omega_F)}^2 - \|\boldsymbol{u}^{n}\|_{L^2(\Omega_F)}^2 + \|\boldsymbol{u}^{n+\theta} -\boldsymbol{u}^{n}\|_{L^2(\Omega_F)}^2\right) 
\nonumber \\
& 
 -\frac{\rho_F}{2 \tau} \|\boldsymbol{u}^{n+\theta} -\boldsymbol{u}^{n+1}\|_{L^2(\Omega_F)}^2
 +2\mu_F \|\boldsymbol{D}(\boldsymbol{u}^{n+\theta}) \|_{L^2(\Omega_F)}^2 
 \nonumber \\
&
= \int_{\Gamma} \boldsymbol\sigma_F(\boldsymbol{u}^{n+\theta},p^{n+\theta})\boldsymbol{n}_F\cdot \boldsymbol{u}^{n+\theta}
+\int_{\Omega_F} \boldsymbol f_F (t^{n+\theta}) \cdot \boldsymbol{u}^{n+\theta}
\nonumber \\
&
+\int_{\Gamma_F^{in}} p_{in}(t^{n+\theta}) \boldsymbol{u}^{n+\theta} \cdot \boldsymbol{n}_F
+\int_{\Gamma_F^{out}} p_{out}(t^{n+\theta}) \boldsymbol{u}^{n+\theta} \cdot \boldsymbol{n}_F.
\end{align*}
Note that using~\eqref{eq:Dflow+1=timefilter}, we have:
\begin{align*}
\| \boldsymbol{u}^{n+\theta} - \boldsymbol{u}^{n} \|_{L^2(\Omega_F)}^2 
-\| \boldsymbol{u}^{n+\theta} - \boldsymbol{u}^{n+1} \|_{L^2(\Omega_F)}^2 
= (2\theta-1)  \|\boldsymbol{u}^{n+1} - \boldsymbol{u}^{n}\|^2_{L^2(\Omega_F)}.
\end{align*}
Hence, the estimate for the fluid problem reads as follows:
\begin{align}
&
\frac{\rho_F}{2 \tau} \left( \|\boldsymbol{u}^{n+1} \|_{L^2(\Omega_F)}^2 - \|\boldsymbol{u}^{n}\|_{L^2(\Omega_F)}^2\right)
+ \frac{\rho_F (2\theta-1)}{2 \tau} \|\boldsymbol{u}^{n+1} - \boldsymbol{u}^{n}\|^2_{L^2(\Omega_F)}
 \nonumber \\
& +2\mu_F \|\boldsymbol{D}(\boldsymbol{u}^{n+\theta}) \|_{L^2(\Omega_F)}^2 
=\int_{\Gamma} \boldsymbol\sigma_F(\boldsymbol{u}^{n+\theta},p^{n+\theta})\boldsymbol{n}_F \cdot \boldsymbol{u}^{n+\theta}
\nonumber \\
&
+\int_{\Omega_F} \boldsymbol f_F (t^{n+\theta}) \cdot \boldsymbol{u}^{n+\theta}
+\int_{\Gamma_F^{in}} p_{in}(t^{n+\theta}) \boldsymbol{u}^{n+\theta} \cdot \boldsymbol{n}_F
\nonumber \\
&
+\int_{\Gamma_F^{out}} p_{out}(t^{n+\theta}) \boldsymbol{u}^{n+\theta} \cdot \boldsymbol{n}_F.
 \label{FBEFE}
\end{align}
Combining solid~\eqref{SBEFE} and fluid~\eqref{FBEFE} estimates and using~\eqref{BEfluidc}, we obtain:
\begin{align}
&\frac{\rho_S}{2 \tau} \left( \|   \boldsymbol{\xi}^{n+1}\|_{L^2(\Omega_S)}^2 - \|  \boldsymbol{\xi}^{n} \|_{L^2(\Omega_S)}^2 \right)
+\frac{\rho_S (2 \theta -1) }{2 \tau}  \left\|   \boldsymbol{\xi}^{n+1} -\boldsymbol{\xi}^n  \right\|_{L^2(\Omega_S)}^2
\nonumber \\
&
+\frac{1}{2 \tau} \left( 
 \| \boldsymbol{\eta}^{n+1} \|_{S}^2
 - \| \boldsymbol{\eta}^{n} \|_{S}^2
 \right)
+  \frac{  (2\theta-1)}{2 \tau}   
  \left\|   \boldsymbol{\eta}^{n+1} -\boldsymbol{\eta}^n  \right\|_{S}^2
+\frac{\rho_F}{2 \tau} \|\boldsymbol{u}^{n+1} \|_{L^2(\Omega_F)}^2 
\nonumber \\
&
-\frac{\rho_F}{2 \tau}  \|\boldsymbol{u}^{n}\|_{L^2(\Omega_F)}^2
+ \frac{\rho_F (2\theta-1)}{2 \tau} \|\boldsymbol{u}^{n+1} - \boldsymbol{u}^{n}\|^2_{L^2(\Omega_F)}
 +2\mu_F \|\boldsymbol{D}(\boldsymbol{u}^{n+\theta}) \|_{L^2(\Omega_F)}^2 
 \nonumber \\
&
=\int_{\Omega_F} \boldsymbol f_F (t^{n+\theta}) \cdot \boldsymbol{u}^{n+\theta}
+\int_{\Gamma_F^{in}} p_{in}(t^{n+\theta}) \boldsymbol{u}^{n+\theta} \cdot \boldsymbol{n}_F
\notag 
\\
&
+\int_{\Gamma_F^{out}} p_{out}(t^{n+\theta}) \boldsymbol{u}^{n+\theta} \cdot \boldsymbol{n}_F.
\label{BEFEcomb}
\end{align}
Using the Cauchy-Schwarz, Trace, Poincar\'e and Korn inequalities~\cite{bukavc2012fluid}, we can estimate:
\begin{align}
&\int_{\Omega_F} \boldsymbol f_F (t^{n+\theta}) \cdot \boldsymbol{u}^{n+\theta}
+\int_{\Gamma_F^{in}} p_{in}(t^{n+\theta}) \boldsymbol{u}^{n+\theta} \cdot \boldsymbol{n}_F
+\int_{\Gamma_F^{out}} p_{out}(t^{n+\theta}) \boldsymbol{u}^{n+\theta} \cdot \boldsymbol{n}_F
\notag 
\\
&\leq
\frac{C_1  }{\mu_F} \|  \boldsymbol f_F(t^{n+\theta})\|^2_{L^2(\Omega_F)}
+
\frac{C_2 }{\mu_F} \| p_{in}(t^{n+\theta}) \|^2_{L^2(\Gamma_F^{in})}
+
\frac{C_2 }{\mu_F} \| p_{out}(t^{n+\theta}) \|^2_{L^2(\Gamma_F^{out})}
\notag \\
&
 +\mu_F \|\boldsymbol{D}(\boldsymbol{u}^{n+\theta}) \|_{L^2(\Omega_F)}^2,
 \label{Sforcing}
\end{align}
where $C_1$ and $C_2$ do not depend on the time-discretization parameter $\tau$.
Combining~\eqref{Sforcing} with~\eqref{BEFEcomb}, summing from $n=2$ to $N-1$ and multiplying by $\tau$ yields the desired estimate.

\end{proof}

\if 1=0

\section{Extension to moving domain fluid-structure interaction}\label{movingDomain}
In this section, we extend the model and numerical scheme presented in Algorithm~\ref{algorithm1} to describe FSI in a moving domain. 
We assume that the structure equations are given in a Lagrangian framework, with respect to a reference domain $\hat{\Omega}_S$. The fluid
and structure domains at time $t$ will be denoted as $\Omega_F(t)$ and $\Omega_S(t)$, respectively. 
The fluid equations will be described in the ALE formulation.

To track the deformation of the fluid domain in time, we introduce a smooth, invertible, ALE mapping ${\mathcal{A}}: \hat{\Omega}_F \times [0,T] \rightarrow \Omega_F(t)$ given by
\begin{equation*}
{\mathcal{A}} ({\boldsymbol X},t)=  {\boldsymbol X} + {\boldsymbol \eta}_F({\boldsymbol X},t), \quad \; \textrm{for all } {\boldsymbol X} \in \hat{\Omega}_F, t \in [0,T],
\label{ale}
\end{equation*}
where ${\boldsymbol \eta}_F$ denotes the  displacement of the fluid domain. We denote the fluid deformation gradient by ${\boldsymbol F} = {\nabla}\mathcal{A}$ and its determinant by ${J}$.  We compute ${\boldsymbol \eta}_F$ using as a harmonic extension, defined by~\cite{langer2018numerical}:
\begin{align}
&- \Delta {\boldsymbol \eta}^{n+1}_F = 0 & \textrm{in} \; \hat{\Omega}_F,  \\
&  {\boldsymbol \eta}^{n+1}_F = 0 & \textrm{on} \; \hat{\Gamma}^{in}_F \cup \hat{\Gamma}^{out}_F, \\
&  {\boldsymbol \eta}^{n+1}_F = {\boldsymbol \eta}^{n+1} & \textrm{on} \; \hat{\Gamma},
\end{align}
To simplify the notation, we will write
$$
\int_{\Omega(t^{m})} \boldsymbol v^{n}
\qquad 
\textrm{instead of}
\quad
\int_{\Omega(t^{m})} \boldsymbol v^{n} \circ \mathcal{A}(t^{n}) \circ \mathcal{A}^{-1}(t^{m})
$$
whenever we need to integrate $\boldsymbol v^n$ on a domain $\Omega(t^{m})$, for $m \neq n$.

To model the fluid flow, we extend the previous model and consider the Navier-Stokes equations in the ALE form~\cite{langer2018numerical,multilayered,thick}, given by:
\begin{subequations}
\begin{align}\label{NSale1moving}
& \rho_F \left(  \partial_t \boldsymbol{u} |_{\hat{\Omega}_F}+ (\boldsymbol{u}-\boldsymbol{w}) \cdot \nabla \boldsymbol{u} \right)  = \nabla \cdot \boldsymbol\sigma_F(\boldsymbol u, p) + \boldsymbol f_F&  \textrm{in}\; \Omega_F(t)\times(0,T), \\
 \label{NSale2moving}
&\nabla \cdot \boldsymbol{u} = 0 & \textrm{in}\; \Omega_F(t)\times(0,T),
\end{align}
\end{subequations}
where $\boldsymbol w= \partial_t \boldsymbol x|_{\hat{\Omega}_F} = \partial_t \mathcal{A} \circ \mathcal{A}^{-1}$ is the domain velocity. We note that $ \partial_t \boldsymbol{u} |_{\hat{\Omega}_F}$ denotes  the Eulerian description of the ALE field $\partial_t {\boldsymbol{u}} \circ \mathcal{A}$~\cite{formaggia2010cardiovascular}, {\emph i.e.},
\begin{equation*}
 \partial_t \boldsymbol{u}(\boldsymbol x,t) |_{\hat{\Omega}_F} = \partial_t \boldsymbol{u}(\mathcal{A}^{-1}(\boldsymbol x,t),t).
\end{equation*}
The fluid domain is determined by
$
\Omega_F(t) = \mathcal{A}(\hat{\Omega}_F, t).
$
The structure model remains the same as in~\eqref{solid}.
Hence, the fully-coupled, moving domain FSI problem is given by:
\begin{subequations}
\begin{align}\label{fsi1}
& \rho_F \left( \partial_t \boldsymbol{u} |_{\hat{\Omega}_F}+ (\boldsymbol{u}-\boldsymbol{w}) \cdot \nabla \boldsymbol{u} \right)  = \nabla \cdot \boldsymbol\sigma_F(\boldsymbol u, p)
+ \boldsymbol f_F
 &  \textrm{in}\; \Omega_F(t)\times(0,T), \\
&\nabla \cdot \boldsymbol{u} = 0 & \textrm{in}\; \Omega_F(t)\times(0,T), \\
& \partial_{t} {\boldsymbol \eta}  =  \boldsymbol \xi &  \textrm{in}\; \hat{\Omega}_S\times(0,T), 
\\
&{\rho}_S \partial_{t} {\boldsymbol \xi}  =  {\nabla} \cdot \boldsymbol \sigma_S(\boldsymbol \eta) &  \textrm{in}\; \hat{\Omega}_S\times(0,T), 
\\
& {\boldsymbol{u}} \circ \mathcal{A}= {\boldsymbol \xi} & \textrm{on} \; \hat{\Gamma} \times (0,T), \label{coupling_noslip}\\
&   {J} \boldsymbol \sigma_F \boldsymbol{F}^{-T} \boldsymbol n_F+  {\boldsymbol \sigma_S } {\boldsymbol n}_S =0 & \textrm{on} \; \hat\Gamma \times (0,T), \label{fsi2}
\end{align}
\label{FSImoving}
\end{subequations}
complemented with boundary  conditions~\eqref{neumann},~\eqref{homostructure1} and~\eqref{homostructure2}, and initial conditions~\eqref{initial}.

To solve problem~\eqref{FSImoving}, we extend the approach presented in Algorithm~\ref{algorithm1}. In the following, we denote
$$
 \widehat{\boldsymbol{\sigma}_F \boldsymbol n_F}_{(\kappa)}^{n+\theta} =J^{n+\theta}_{(\kappa)} \boldsymbol\sigma_F(\boldsymbol{u}^{n+\theta}_{(\kappa)},p^{n+\theta}_{(\kappa)})  (\boldsymbol{F}^{n+\theta}_{\kappa})^{-T} \boldsymbol{n}_{F,(\kappa)}^{n+\theta}.
$$

 resulting in the following scheme:
\begin{algorithm}
\label{algorithm2}
Given ${\boldsymbol u}^{0}$ in $\hat{\Omega}_F$, and $ {\boldsymbol \eta}^0,  {\boldsymbol \xi}^0$ in $\hat{\Omega}_S$, 
we first need to compute ${\boldsymbol u}^{\theta_0}, p^{\theta_0}, {\boldsymbol u}^{1}, \boldsymbol \eta_F^1, \boldsymbol w^1$  and $ {\boldsymbol \eta}^{\theta_0}, {\boldsymbol \eta}^1,  {\boldsymbol \xi}^{\theta_0},  {\boldsymbol \xi}^{1}$  with a  second-order method. 
A monolithic method could be used.
Then, for all $n\geq 1$, compute the following steps:
\vskip 0.1 in
\noindent \sc{Step 1}.
\emph{
Set the initial guesses as the linearly extrapolated values:
\begin{align*}
& {\boldsymbol \eta}^{n+\theta}_{(0)}
=  \Big( 1 + \theta_n  \Big) {\boldsymbol \eta}^{n}  - \theta_n {\boldsymbol \eta}^{n-1} ,
\end{align*}
and similarly for ${\boldsymbol \xi}^{n+\theta_n}_{(0)}, {\boldsymbol u}^{n+\theta_n}_{(0)}$.
The pressure initial guess is defined as 
\begin{align*}
p^{n+\theta}_{(0)} = (1 + \tau) p^{n-1+\theta}
				-\tau  p^{n-2+\theta}
.
\end{align*}
For  $\kappa\geq 0$, compute until convergence the \textbf{Backward-Euler} partitioned problem:
\begin{empheq}[left= \ftext{2.2em}{Solid:} \;  {\empheqlbrace \quad}]{alignat=2}
&
  \eqmath[l]{A}{ \frac{{\boldsymbol \eta}^{n+\theta}_{(\kappa+1)} - {\boldsymbol \eta}^{n}}{\theta \tau} 
= 
{\boldsymbol \xi}^{n+\theta}_{(\kappa+1)}} 
& \eqmath[r]{B}{\mbox{ in }\hat\Omega_S,}
\notag
\\
&
  \eqmath[l]{A}{ \rho_S \frac{\boldsymbol{\xi}_{(\kappa+1)}^{n+\theta} - \boldsymbol{\xi}^n}{\theta \tau} = \nabla \cdot \boldsymbol{\sigma}_S(\boldsymbol{\eta}_{(\kappa+1)}^{n+\theta})}
& \eqmath[r]{B}{\mbox{ in }\hat\Omega_S,}
\notag
\\
&
  \eqmath[l]{A}{\alpha \boldsymbol{\xi}^{n+\theta}_{(\kappa+1)} + \boldsymbol\sigma_S(\boldsymbol{\eta}^{n+\theta}_{(\kappa+1)})\boldsymbol{n}_S = \alpha \boldsymbol{u}_{(\kappa)}^{n+\theta} -  \widehat{\boldsymbol{\sigma}_F \boldsymbol n_F}_{(\kappa)}^{n+\theta}}
& \eqmath[r]{B}{\mbox{ on } \hat\Gamma,}
\notag
\end{empheq}
\begin{empheq}[left= \ftext{2.2em}{Geo.:} \;  {\empheqlbrace \quad}]{alignat=2}
&
  \eqmath[l]{A}{
- \Delta {\boldsymbol \eta}^{n+\theta}_{F,(\kappa+1)} = 0 }
& \eqmath[r]{B}{\mbox{ in } \hat{\Omega}_F, }
\notag
\\
&
  \eqmath[l]{A}{ {\boldsymbol \eta}^{n+\theta}_{F,(\kappa+1)} = 0 }
& \eqmath[r]{B}{\mbox{ on }  \hat{\Gamma}^{in}_F \cup \hat{\Gamma}^{out}_F,}
\notag
\\
&
  \eqmath[l]{A}{ {\boldsymbol \eta}^{n+\theta}_{F,(\kappa+1)} = {\boldsymbol \eta}^{n+\theta}_{(\kappa+1)}}
& \eqmath[r]{B}{\mbox{ on } \hat\Gamma,}
\notag
\\
&
  \eqmath[l]{A}{ \boldsymbol w^{n+\theta}_{(\kappa+1)} = \frac{\boldsymbol{\eta}_{F,(\kappa+1)}^{n+\theta} - \boldsymbol{\eta}_F^n}{\theta \tau}}
  & \eqmath[r]{B}{\mbox{ in }\hat\Omega_F,}
  \notag
  \\
  &
    \eqmath[l]{A}{
    \Omega_{F,(\kappa+1)}^{n+\theta} = (\mathbf{I}+\boldsymbol{\eta}_{F,(\kappa+1)}^{n+\theta})\hat{\Omega}_F,
    }
    \notag
\end{empheq}
\begin{empheq}[left= \ftext{2.2em}{Fluid:}  \;  {\empheqlbrace \quad}]{alignat=2}
&
 \eqmath[l]{A}{\rho_F \frac{{\boldsymbol u}^{n+\theta}_{(\kappa+1)} - {\boldsymbol u}^{n}}{\theta \tau} 
 +\left({\boldsymbol u}^{n+\theta}_{(\kappa)} - {\boldsymbol w}^{n+\theta}_{(\kappa+1)} \right)\cdot \nabla {\boldsymbol u}^{n+\theta}_{(\kappa+1)}
 }
\notag
\\
&
 \eqmath[l]{A}{
\qquad = \nabla\cdot {\boldsymbol \sigma}_F ({\boldsymbol u}^{n+\theta}_{(\kappa+1)} , p^{n+\theta}_{(\kappa+1)} ) +\boldsymbol{f}_F(t^{n+\theta}) }
& \eqmath[r]{B}{ \mbox{ in }\Omega_{F,(\kappa+1)}^{n+\theta},}
\notag
\\
&
 \eqmath[l]{A}{\nabla \cdot {\boldsymbol u}^{n+\theta}_{(\kappa+1)} = 0}
& \eqmath[r]{B}{\mbox{ in }\Omega_{F,(\kappa+1)}^{n+\theta},}
\notag
\\
&
 \eqmath[l]{A}{\alpha \boldsymbol{u}^{n+\theta}_{(\kappa+1)} - \boldsymbol\sigma_F(\boldsymbol{u}_{(\kappa)}^{n+\theta},p_{(\kappa)}^{n+\theta}) \boldsymbol{n}_{F,(\kappa)}^{n+\theta}}
\notag
\\
 &
  \eqmath[l]{A}{
\qquad  = \alpha \boldsymbol{\xi}^{n+\theta}_{(\kappa+1)} -\boldsymbol\sigma_F(\boldsymbol{u}_{(\kappa+1)}^{n+\theta},p_{(\kappa+1)}^{n+\theta}) \boldsymbol{n}_{F,(\kappa+1)}^{n+\theta}}
&\eqmath[r]{B}{ \mbox{ on }\Gamma_{(\kappa+1)}^{n+\theta}.}
\notag
\end{empheq}
The converged solutions,
\begin{align}
{\boldsymbol \eta}^{n+\theta}_{(\kappa)} , 
{\boldsymbol \xi}^{n+\theta}_{(\kappa)},
{\boldsymbol u}^{n+\theta}_{(\kappa)} , 
p^{n+\theta}_{(\kappa)},
\boldsymbol w^{n+\theta}_{(\kappa)},
\boldsymbol{\eta}_{F,(\kappa)}^{n+\theta} 
\xrightarrow{\kappa\rightarrow\infty}
{\boldsymbol \eta}^{n+\theta} , 
{\boldsymbol \xi}^{n+\theta} , 
{\boldsymbol u}^{n+\theta} , 
p^{n+\theta},
\boldsymbol w^{n+\theta},
\boldsymbol{\eta}_{F}^{n+\theta} ,
\nonumber
\end{align}
then satisfy:
\begin{empheq}[left= \ftext{2.2em}{Solid:} \;  {\empheqlbrace \quad}]{alignat=2}
&
  \eqmath[l]{A}{ \frac{{\boldsymbol \eta}^{n+\theta} - {\boldsymbol \eta}^{n}}{\theta \tau} 
= 
{\boldsymbol \xi}^{n+\theta}} 
& \eqmath[r]{B}{\mbox{ in }\hat\Omega_S,}
\notag
\\
&
  \eqmath[l]{A}{ \rho_S \frac{\boldsymbol{\xi}^{n+\theta} - \boldsymbol{\xi}^n}{\theta \tau} = \nabla \cdot \boldsymbol{\sigma}_S(\boldsymbol{\eta}^{n+\theta})}
& \eqmath[r]{B}{\mbox{ in }\hat\Omega_S,}
\notag
\\
&
  \eqmath[l]{A}{ \boldsymbol\sigma_S(\boldsymbol{\eta}^{n+\theta})\boldsymbol{n}_S =  -  \widehat{\boldsymbol{\sigma}_F \boldsymbol n_F}^{n+\theta}}
& \eqmath[r]{B}{\mbox{ on } \hat\Gamma,}
\notag
\end{empheq}
\begin{empheq}[left= \ftext{2.2em}{Geo.:} \;  {\empheqlbrace \quad}]{alignat=2}
&
  \eqmath[l]{A}{
- \Delta {\boldsymbol \eta}^{n+\theta}_{F} = 0 }
& \eqmath[r]{B}{\mbox{ in } \hat{\Omega}_F, }
\notag
\\
&
  \eqmath[l]{A}{ {\boldsymbol \eta}^{n+\theta}_{F} = 0 }
& \eqmath[r]{B}{\mbox{ on }  \hat{\Gamma}^{in}_F \cup \hat{\Gamma}^{out}_F,}
\notag
\\
&
  \eqmath[l]{A}{ {\boldsymbol \eta}^{n+\theta}_{F} = {\boldsymbol \eta}^{n+\theta}}
& \eqmath[r]{B}{\mbox{ on } \hat\Gamma,}
\notag
\\
&
  \eqmath[l]{A}{ \boldsymbol w^{n+\theta} = \frac{\boldsymbol{\eta}_{F}^{n+\theta} - \boldsymbol{\eta}_F^n}{\theta \tau}}
  & \eqmath[r]{B}{\mbox{ in }\hat\Omega_F,}
  \notag
  \\
  &
    \eqmath[l]{A}{
    \Omega_{F}^{n+\theta} = (\mathbf{I}+\boldsymbol{\eta}_{F}^{n+\theta})\hat{\Omega}_F,
    }
    \notag
\end{empheq}
\begin{empheq}[left= \ftext{2.2em}{Fluid:}  \;  {\empheqlbrace \quad}]{alignat=2}
&
 \eqmath[l]{A}{\rho_F \frac{{\boldsymbol u}^{n+\theta} - {\boldsymbol u}^{n}}{\theta \tau} 
 +\left({\boldsymbol u}^{n+\theta} - {\boldsymbol w}^{n+\theta} \right)\cdot \nabla {\boldsymbol u}^{n+\theta}
 }
\notag
\\
&
 \eqmath[l]{A}{
\qquad = \nabla\cdot {\boldsymbol \sigma}_F ({\boldsymbol u}^{n+\theta}, p^{n+\theta} ) +\boldsymbol{f}_F(t^{n+\theta}) }
& \eqmath[r]{B}{ \mbox{ in }\Omega_{F}^{n+\theta},}
\notag
\\
&
 \eqmath[l]{A}{\nabla \cdot {\boldsymbol u}^{n+\theta}= 0}
& \eqmath[r]{B}{\mbox{ in }\Omega_{F}^{n+\theta},}
\notag
\\
&
 \eqmath[l]{A}{ \boldsymbol{u}^{n+\theta}=  \boldsymbol{\xi}^{n+\theta}}
&\eqmath[r]{B}{ \mbox{ on }\Gamma^{n+\theta}.}
\notag
\end{empheq}
}
\vskip 0.1 in
\noindent \sc{Step 2}.
\emph{
Now evaluate the following:
\begin{empheq}[left= \ftext{2.2em}{Solid:}  \;  {\empheqlbrace} \quad]{alignat=2}
&  \eqmath[l]{A}{{\boldsymbol \eta}^{n+1} 
= \frac{1}{\theta} {\boldsymbol \eta}^{n+\theta} - \frac{1-\theta}{\theta}{\boldsymbol \eta}^{n} }
&
\eqmath[r]{B}{ 
\mbox{ in }\hat\Omega_S,}
\notag
\\
&
 \eqmath[l]{A}{
{\boldsymbol \xi}^{n+1} 
= 
\frac{1}{\theta} {\boldsymbol \xi}^{n+\theta} - \frac{1-\theta}{\theta}{\boldsymbol \xi}^{n}
}
&
\eqmath[r]{B}{ 
\mbox{ in }\hat\Omega_S,}
\notag
\end{empheq}
{\color{red}
\begin{empheq}[left= \ftext{2.2em}{Geo.:} \;  {\empheqlbrace \quad}]{alignat=2}
&
  \eqmath[l]{A}{
- \Delta {\boldsymbol \eta}^{n+1}_{F} = 0 }
& \eqmath[r]{B}{\mbox{ in } \hat{\Omega}_F, }
\notag
\\
&
  \eqmath[l]{A}{ {\boldsymbol \eta}^{n+1}_{F} = 0 }
& \eqmath[r]{B}{\mbox{ on }  \hat{\Gamma}^{in}_F \cup \hat{\Gamma}^{out}_F,}
\notag
\\
&
  \eqmath[l]{A}{ {\boldsymbol \eta}^{n+1}_{F} = {\boldsymbol \eta}^{n+1}}
& \eqmath[r]{B}{\mbox{ on } \hat\Gamma,}
\notag
\\
&
  \eqmath[l]{A}{ \boldsymbol w^{n+1} = \frac{\boldsymbol{\eta}_{F}^{n+1} - \boldsymbol{\eta}_F^{n+\theta}}{(1-\theta) \tau}}
  & \eqmath[r]{B}{\mbox{ in }\hat\Omega_F,}
  \notag
  \\
  &
    \eqmath[l]{A}{
    \Omega_{F}^{n+1} = (\mathbf{I}+\boldsymbol{\eta}_{F}^{n+1})\hat{\Omega}_F,
    }
    \notag
\end{empheq}
}
\begin{empheq}[left= \ftext{2.2em}{Fluid:}  \;  {\empheqlbrace} \quad]{alignat=2}
& 
\eqmath[l]{A}{
{\boldsymbol u}^{n+1} = \frac{1}{\theta} {\boldsymbol u}^{n+\theta} - \frac{1-\theta}{\theta} {\boldsymbol u}^{n}}
&
\eqmath[r]{B}{ \mbox{ in }\Omega_F^{n+\theta},}
\notag
\end{empheq}
}
Set $n=n+1$, and go back to Step 1.
\end{algorithm}
The problems above are complimented with the same boundary conditions as in Algorithm~\ref{algorithm1}.


\fi

\section{Numerical Examples}~\label{numerics}
In this section, we investigate the accuracy and the rates of convergence of the proposed method. To discretize the problem in space, we use the finite element method with uniform, conforming meshes, and denote the mesh size by $h$. The numerical method is implemented in the finite element solver FreeFem++~\cite{hecht2012new}.
 The benchmark problem presented in Example 1 is based on the method of manufactured solutions. In this example, we compute the convergence rates obtained with 
Algorithm~\ref{algorithm1} for different values of parameters $\theta, \alpha$ and the tolerance $\epsilon$. In the second example, we consider a benchmark problem commonly used to test FSI solvers. Using this example, we compare the results obtained using the proposed method  to a monolithic scheme and a loosely-coupled partitioned scheme.

\subsection{Example 1}

In this example we use a method of manufactured solutions to investigate the accuracy of the computational method presented in Algorithm~\ref{algorithm1}.
For this purpose, the set of problems we are solving is based on the time-dependent Stokes equations and elastodynamics equations with added forcing terms:
\begin{align*}
& \rho_F \partial_t \boldsymbol{u}  = \nabla \cdot \boldsymbol\sigma_F(\boldsymbol u, p) + \boldsymbol{f}_F&  \textrm{in}\; \Omega_F \times(0,T), 
\\ 
&\nabla \cdot \boldsymbol{u} = g & \textrm{in}\; \Omega_F \times(0,T),
\\
&{\rho}_S \partial_{t} {\boldsymbol \xi}  =  {\nabla} \cdot \boldsymbol \sigma_S(\boldsymbol \eta) + \boldsymbol{f}_S&  \textrm{in}\; {\Omega}_S\times(0,T). 
\end{align*}

The FSI problem is defined in a unit square domain such that the fluid domain resides in the lower half, $\Omega_F=(0,1) \times (0, 0.5)$, and the solid domain occupies the upper half,  $\Omega_S=(0,1) \times (0.5, 1)$. We use the following physical parameters: $\lambda_S=\mu_S= \rho_S= \rho_F=\mu_F=1$.
The exact solutions are given by:
\begin{align*}
  \boldsymbol{\eta}_{ref}
   &= 
    \begin{bmatrix}
	10^{-3}2x(1-x)y(1-y)e^t \\ 10^{-3}x(1-x)y(1-y) e^t 
   \end{bmatrix},
   \\
\boldsymbol{u}_{ref}
  & =
   \begin{bmatrix}
   	10^{-3}2x(1-x)y(1-y)e^t \\ 10^{-3}x(1-x)y(1-y)e^t 
   \end{bmatrix},
   \\
   p_{ref} & =-10^{-3} e^t \lambda_S \left(2(1-2x)y(1-y) + x(1-x)(1-2y)\right). 
\end{align*}
Using the exact solutions, we compute the forcing terms $\boldsymbol f_F, g$ and $\boldsymbol f_S$. We impose  Dirichlet boundary conditions on the bottom of the fluid domain, and Neumann conditions on other external boundaries. The sub-iterative portion of the scheme, defined by equations~\eqref{eq:Dstrctr+1/2--kappa}-\eqref{eq:Dflow+1/2--kappa}, is run until the relative errors between two consecutive approximations for the fluid velocity, structure velocity and displacement are less than a given tolerance, $\epsilon$.
To discretize the problem in space, we used $\mathbb{P}_2$ elements for the fluid velocity and solid displacement and velocity, and $\mathbb{P}_1$ elements for the  pressure.
In order to compute the rates of convergence, we first define the errors for the solid displacement and velocity, and fluid velocity as:
\begin{align*}
e_{\boldsymbol \eta} = \frac{ \left\Vert \boldsymbol \eta -\boldsymbol \eta_{ref} \right\Vert^2_{S}}{\left\Vert \boldsymbol \eta_{ref} \right\Vert^2_S},
\quad
e_{\boldsymbol \xi}=\frac{\left\Vert \boldsymbol \xi - \boldsymbol \xi_{ref} \right\Vert_{L^2({\Omega}_S)}}{\left\Vert \boldsymbol \xi_{ref} \right\Vert_{L^2({\Omega}_S)}}, 
\quad
e_F=\frac{\left\Vert \boldsymbol u- \boldsymbol u_{ref} \right\Vert_{L^2(\Omega_F)}}{\left\Vert \boldsymbol u_{ref} \right\Vert_{L^2(\Omega_F)}},
\end{align*}
respectively.

Recall that the Robin-type boundary conditions at the interface include a combination parameter, $\alpha$, which places an emphasis on the coupling condition of choice (i.e. kinematic or dynamic). In particular, case $\alpha=0$ gives the dynamic coupling condition, while $\alpha=\infty$ leads to the kinematic coupling condition. As similar generalized Robin boundary conditions have been used by other authors, finding an optimal value of $\alpha$ has been previously investigated. In particular, in~\cite{gerardo2010analysis} the authors suggest to use:
\begin{gather}
\alpha_{opt} = \frac{\rho_S H_S}{\tau} + \beta H_S \tau, 
\label{alphaformula}
\end{gather}
where $H_S$ is the height of the solid domain and 
$\beta=\displaystyle\frac{E}{1-\nu^2}(4\rho_1^2 - 2(1-\nu)\rho_2^2),$
with $E$ denoting the Young's modulus, $\nu$ representing the Poisson's ratio and $\rho_1$ and $\rho_2$ signifying the mean and Gaussian curvatures of the fluid-structure interface, respectively. In our case, we compute $\beta$ as:
$$\beta=  \frac{E}{(1-\nu^2)R^2} , $$ 
where $R$ is the height of the fluid domain. 
In addition to $\alpha_{opt},$ we explore other values: $\alpha =10, 100, 500$ and $1000.$

\begin{figure}[ht]
\centering{
\includegraphics[scale=0.5]{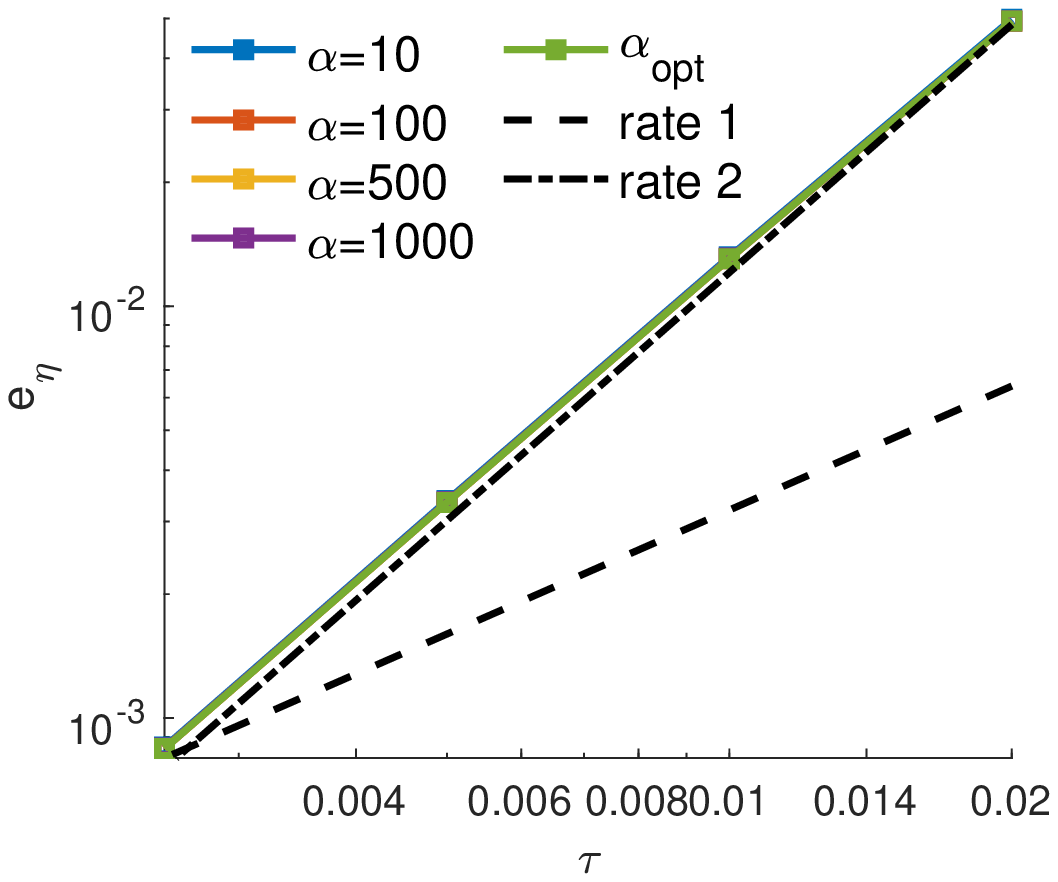}
\includegraphics[scale=0.5]{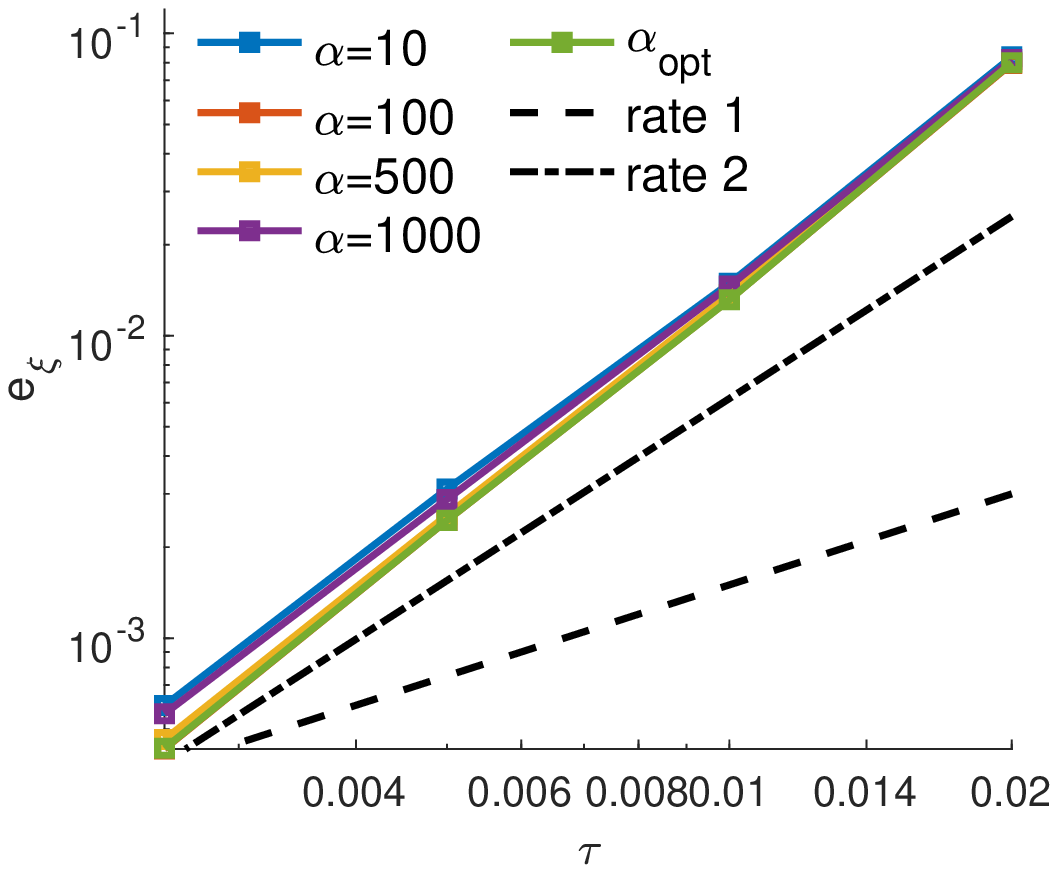}
\includegraphics[scale=0.5]{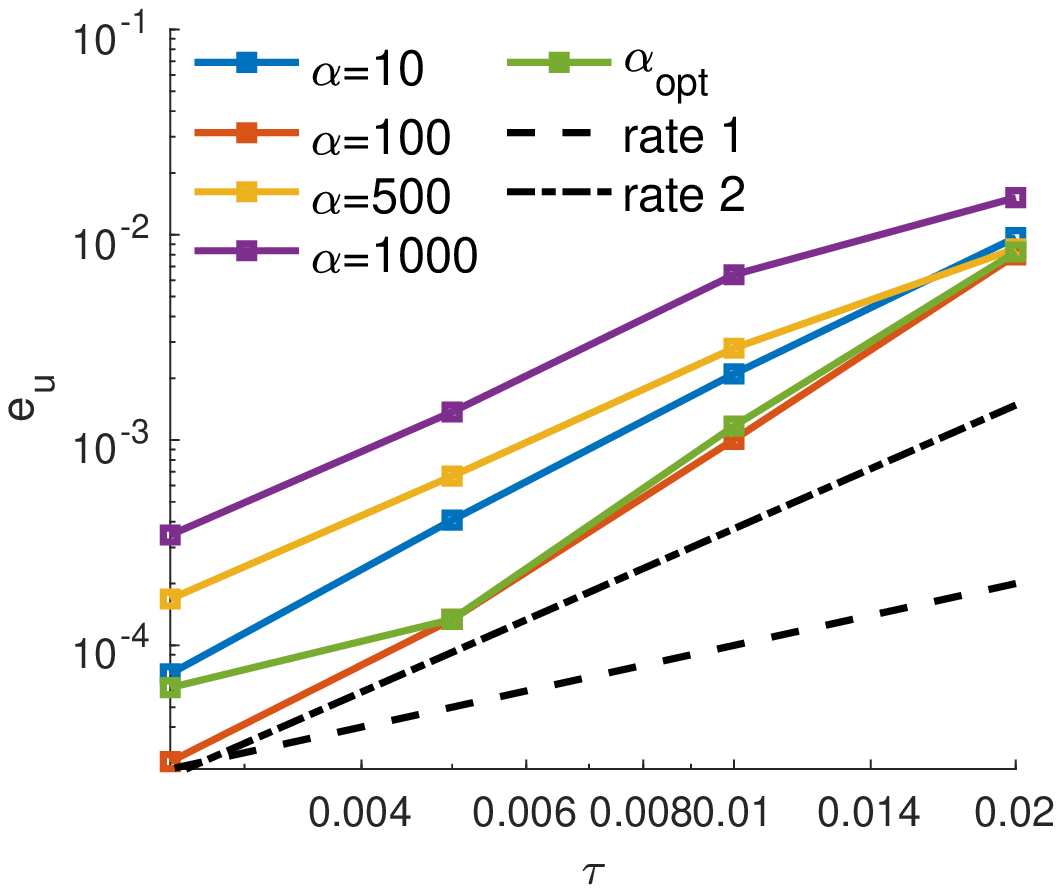}
}
\caption{Example 1: Errors obtained with $\theta=0.5$ and $\epsilon=10^{-4}$ by varying $\alpha$ for the solid displacement, $\boldsymbol \eta$, (top-left), solid velocity, $\boldsymbol \xi$, (top-right), and fluid velocity, $\boldsymbol u$, (bottom) at the final time. }
\label{BaseExample}
\end{figure}
In the first test, we set $\epsilon=10^{-4}$ and $\theta=\frac12.$ We recall that we expect to obtain the convergence rate of $\mathcal{O}(\tau^2)$ because for $\theta=\frac12,$ the discretization method corresponds to the midpoint rule.  To compute the rates of convergence, we  use the following time and space discretization parameters:
$$\left\{  \tau, h \right\} = \left\{  \frac{0.02}{2^i}, \frac{0.25}{2^i} \right\}_{i=0}^3.$$
The final time is $T=0.3$ s. The rates of convergence obtained for different values of $\alpha$ are shown in  Figure~\ref{BaseExample}.

Overall, Figure \ref{BaseExample} shows very promising convergence rates, averaging around 2 or above for all variables. The errors  for $\boldsymbol{\eta}$ are almost indistinguishable for different values of $\alpha$, showing a near-perfect convergence rate of 2. For $\boldsymbol{\xi}$, we observe that there is only a very slight disadvantage for $\alpha=10$ and 1000, with errors only slightly increased at the finest mesh size and time step; the convergence rates for all values of $\alpha$ are mostly better than 2. Finally, for $\boldsymbol{u}$, the convergence rates slightly exceed 2 for the most part with smallest errors when $\alpha=100$ and $\alpha_{opt}$.

\begin{figure}[ht]
\centering{
\includegraphics[scale=0.5]{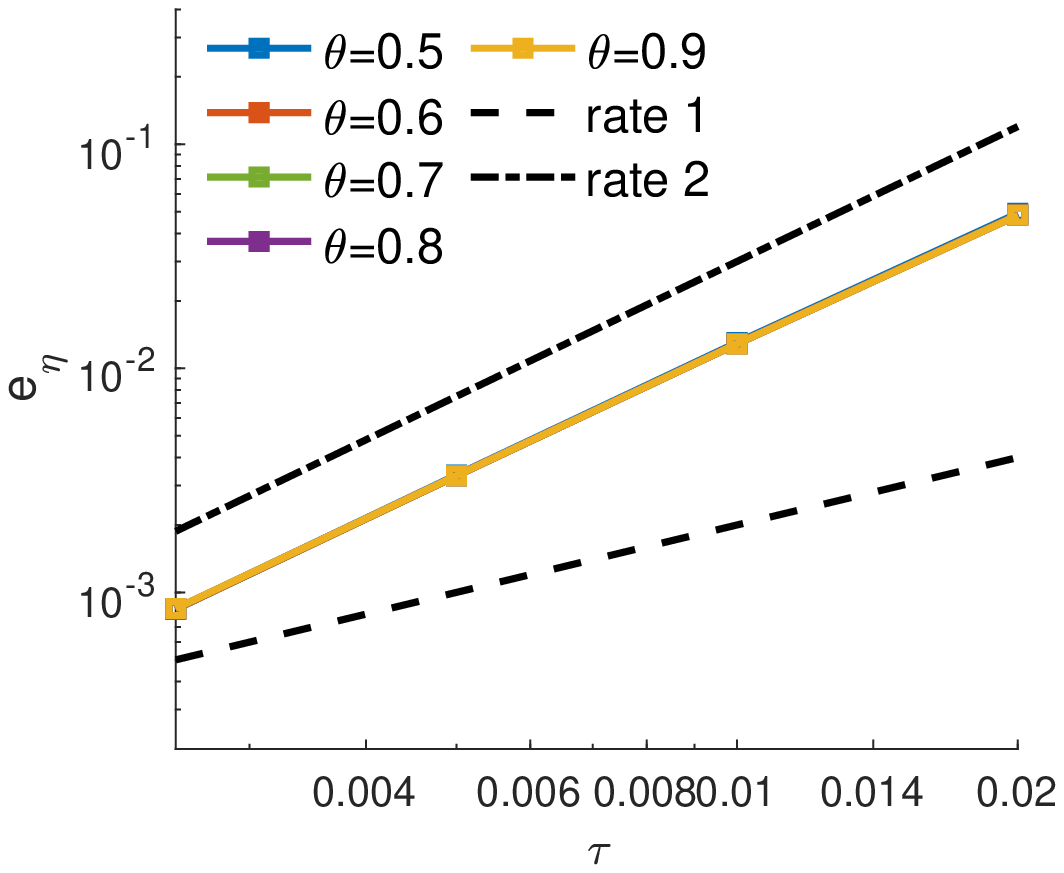}
\includegraphics[scale=0.5]{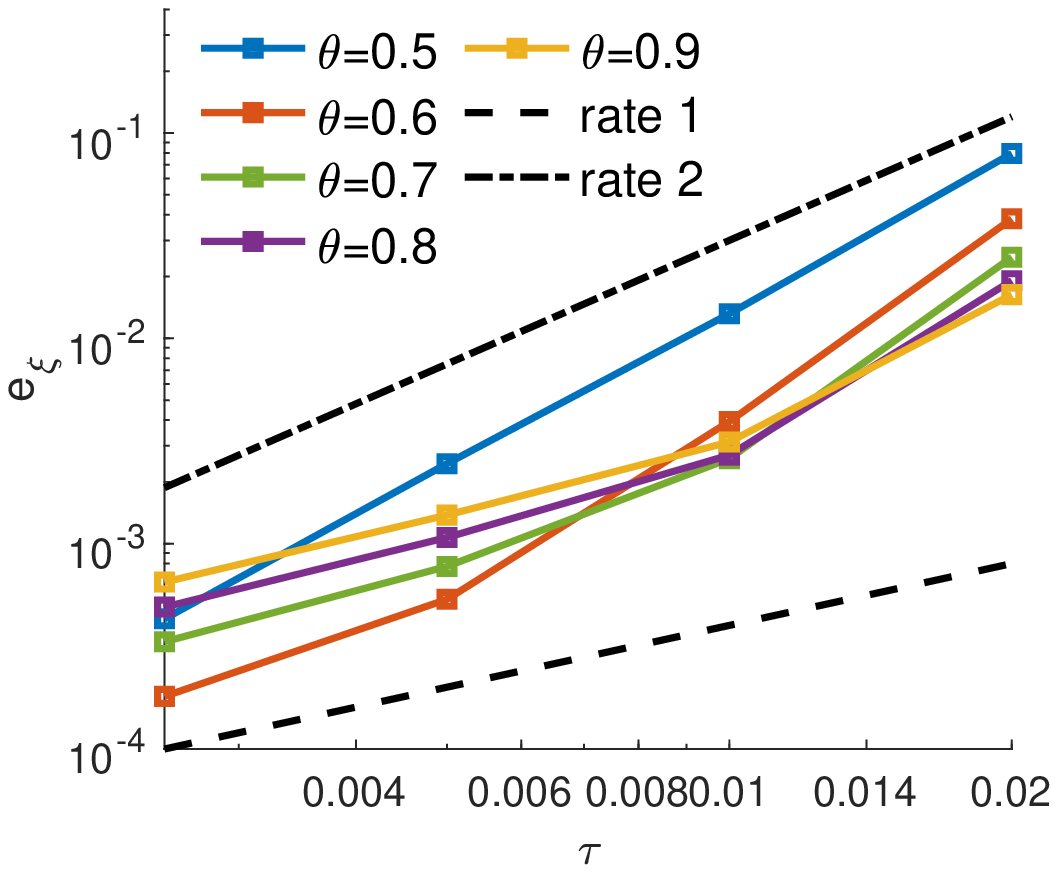}
\includegraphics[scale=0.5]{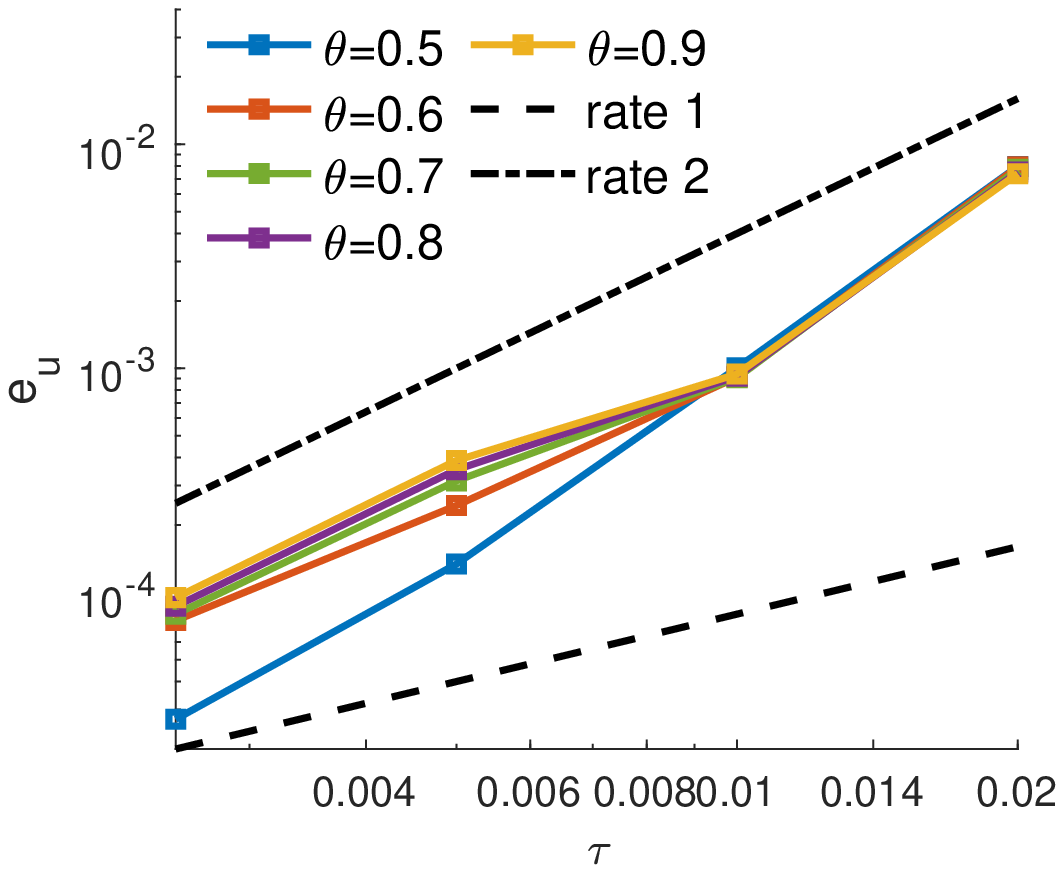}
}
\caption{Example 1: Errors obtained with $\alpha=100$ and $\epsilon=10^{-4}$ by varying $\theta$ for the solid displacement, $\boldsymbol \eta$, (top-left), solid velocity, $\boldsymbol \xi$, (top-right), and fluid velocity, $\boldsymbol u$, (bottom) at the final time. }
\label{eta_xi_u}
\end{figure}
In the next simulation, we investigate the effect of $\theta$  on the convergence rates. In particular, we use $\theta= 0.5, 0.6, 0.7, 0.8$ and $0.9$, keeping the same values of $\epsilon, \tau$ and $h$, and using $\alpha=100$. The rates of convergence are shown in Figure~\ref{eta_xi_u}. 
 As before, the errors  for $\boldsymbol{\eta}$ show a  convergence rate of 2 for all values of $\theta$. For $\boldsymbol{\xi}$, we observe that $\theta$=0.5 maintains a convergence rate of 2 or greater, while rates for other values of $\theta$ are larger than 2 for the coarsest mesh and time step and decrease to values close to 1 at the most-refined time step and mesh size. We note that $\theta$=0.5 has  larger errors than either $\theta=0.6$ or 0.7, albeit on the order of magnitude of $10^{-4}$. Finally, for $\boldsymbol{u}$, the convergence  rates are at least 2 for larger values of time step and mesh size, but similar as in the previous case, they drop down to $\mathcal{O}(\tau)$ as $\theta$ increases. The errors are the smallest when $\theta=0.5$.

In the next test,  we continue to evaluate similar conditions as in Figure~\ref{BaseExample}, but this time we use a  tolerance of $\epsilon=10^{-3}$ instead of $10^{-4}$.  As in Figure~\ref{BaseExample}, we use $\theta=\frac12$ in conjunction with a range of different values of $\alpha$. Figure~\ref{eps1e-3} shows the errors for the structure displacement (top-left), structure velocity (top-right), and fluid velocity (bottom).
  \begin{figure}[ht]
\centering{
\includegraphics[scale=0.5]{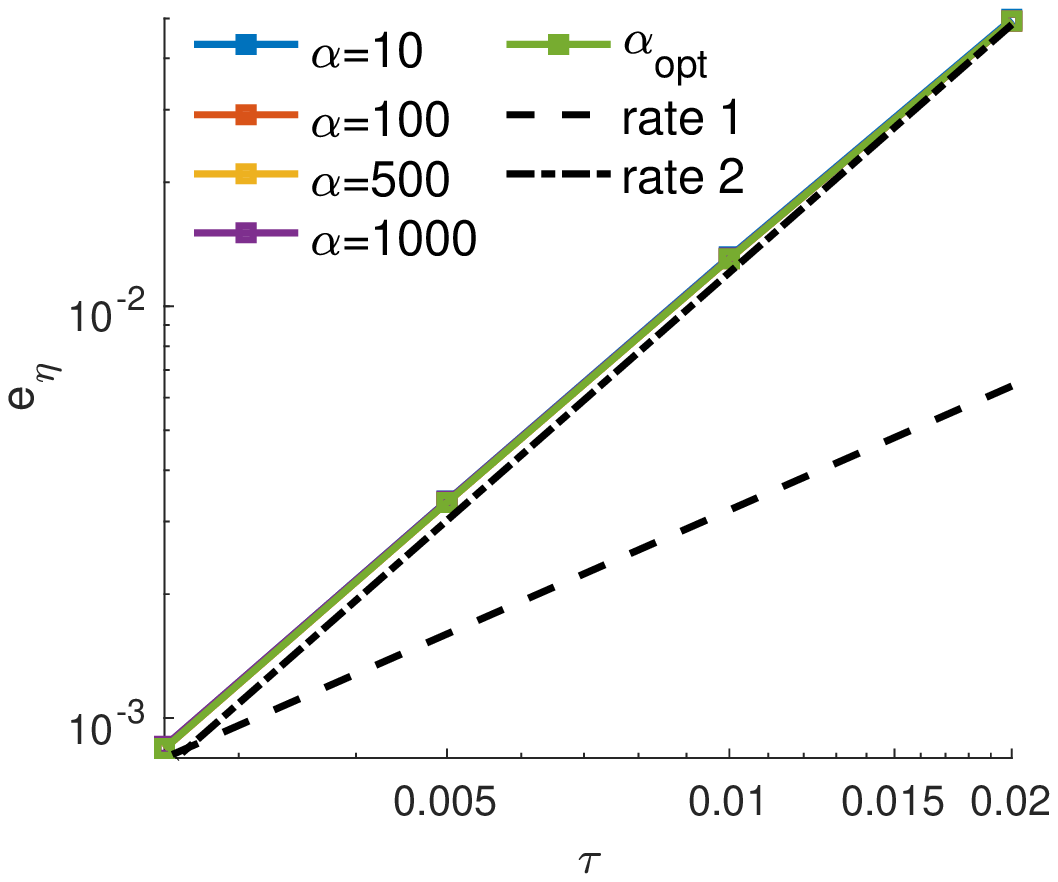}
\includegraphics[scale=0.5]{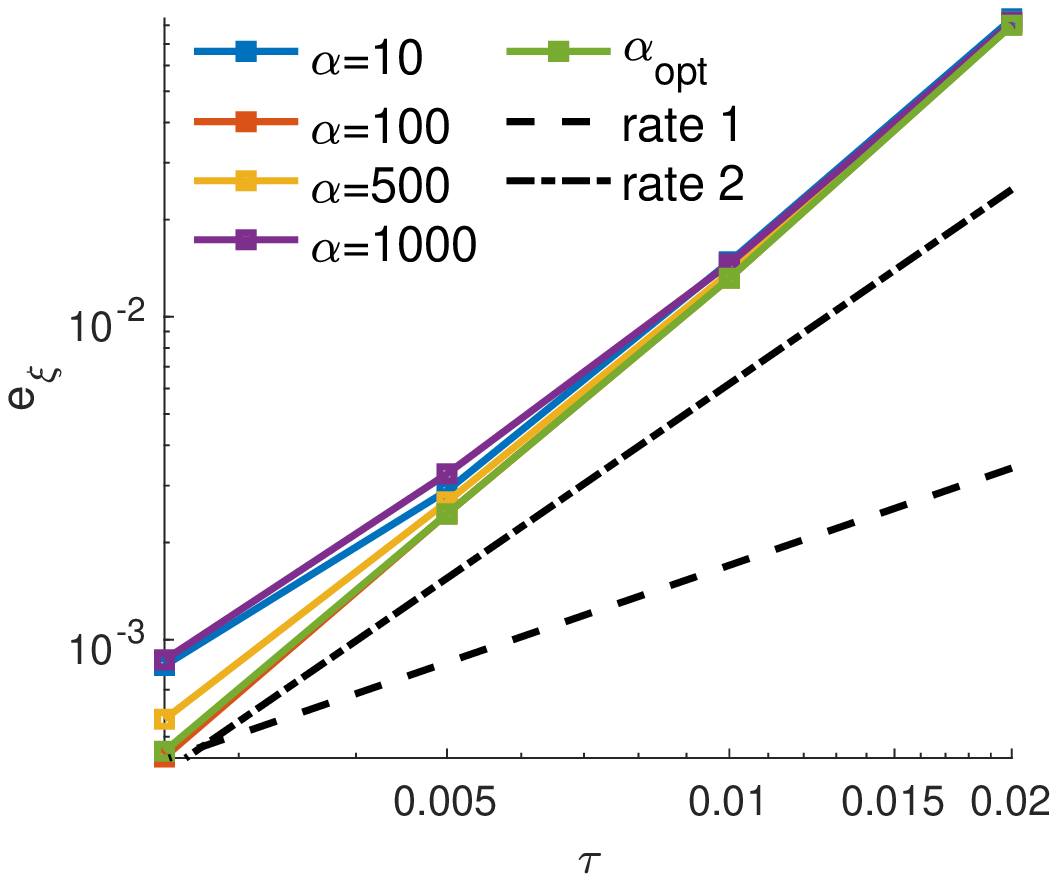}
\includegraphics[scale=0.5]{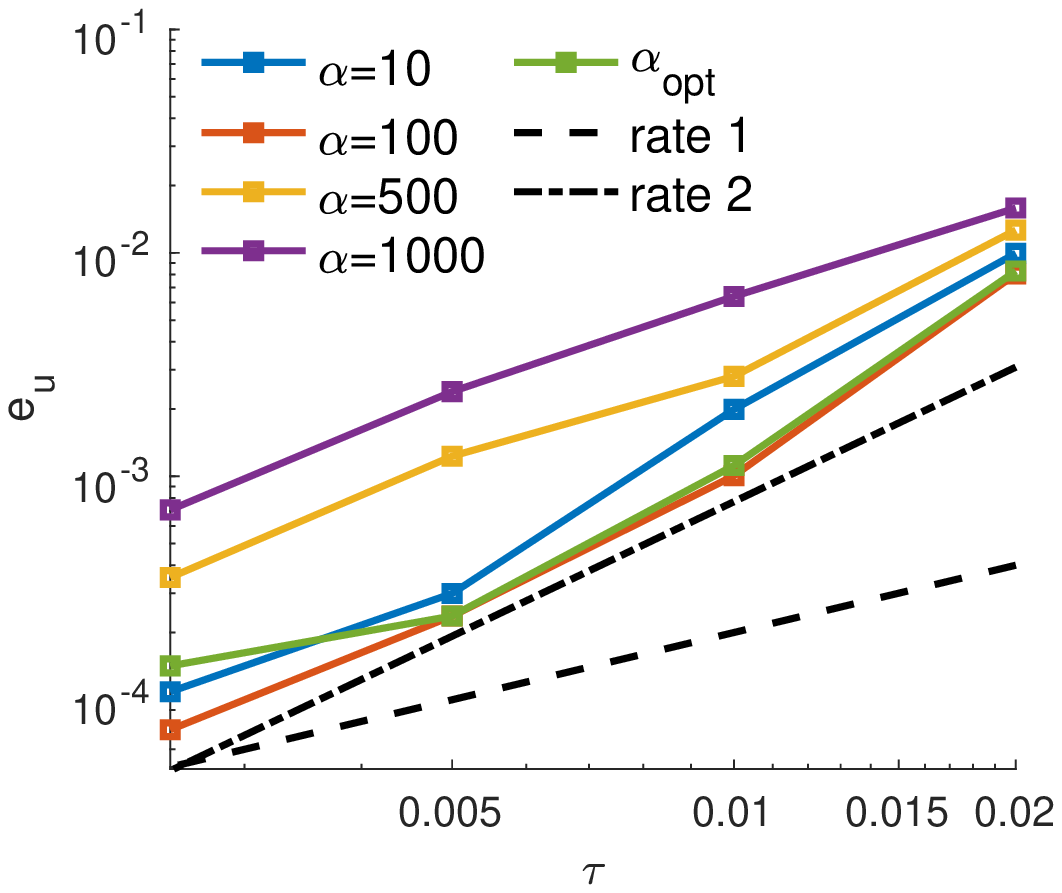}
}
\caption{Example 1: Errors obtained with $\theta=0.5$ and $\epsilon=10^{-3}$ by varying $\alpha$ for the solid displacement, $\boldsymbol \eta$, (top-left), solid velocity, $\boldsymbol \xi$, (top-right), and fluid velocity, $\boldsymbol u$, (bottom) at the final time. }
\label{eps1e-3}
\end{figure}
In this case, the rates for the solid velocity and displacement remain close to 2, while the rates for the fluid velocity become sub-optimal in most cases. 

\begin{figure}[ht]
\centering{
\includegraphics[scale=0.5]{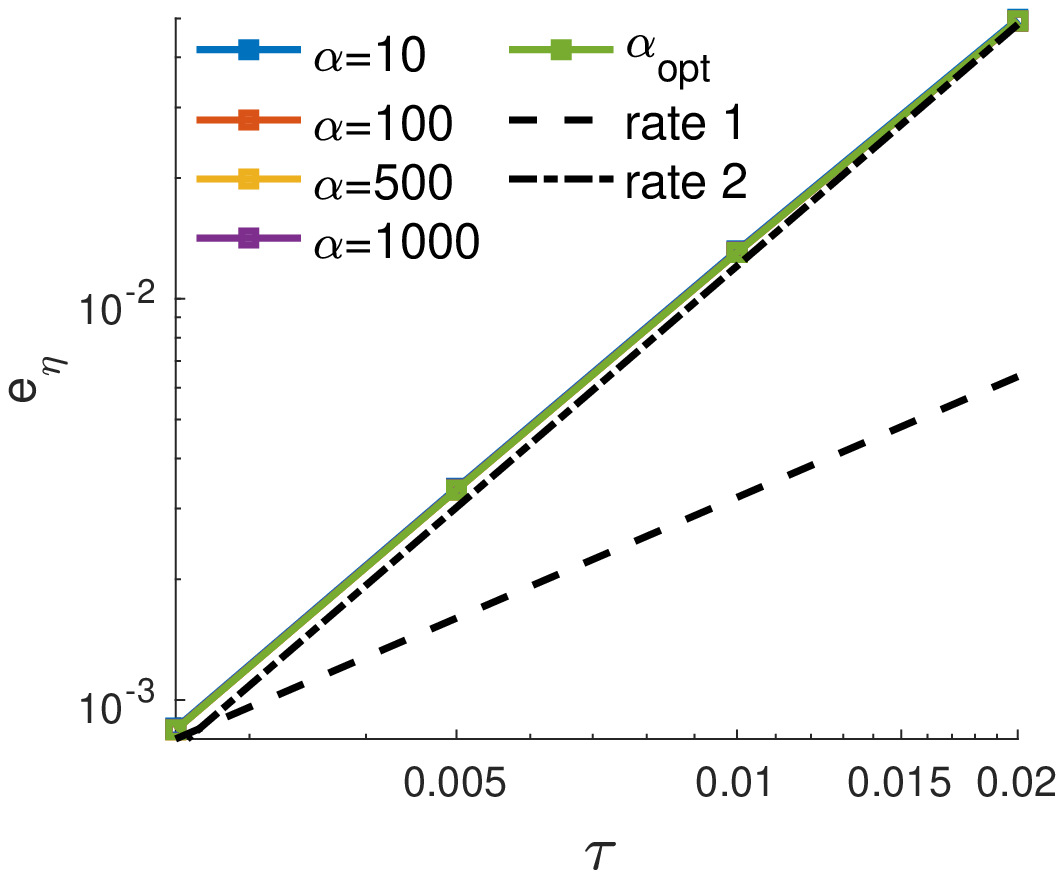}
\includegraphics[scale=0.5]{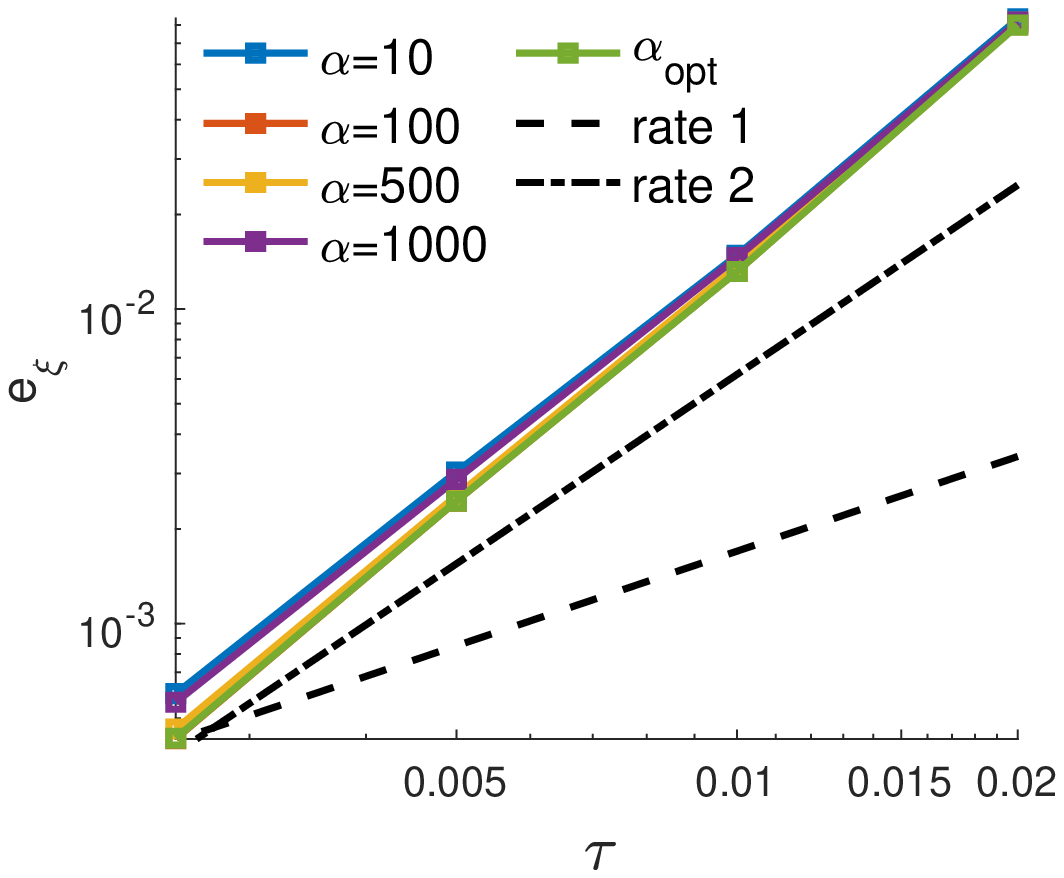}
\includegraphics[scale=0.5]{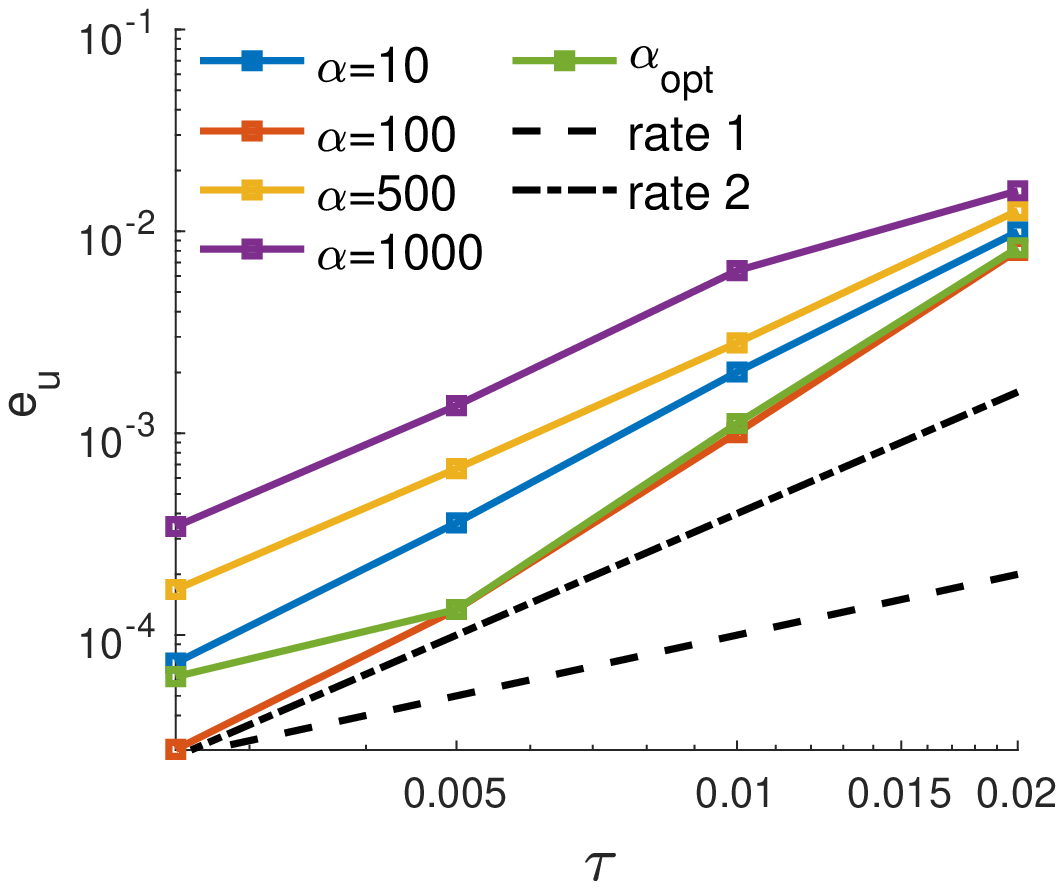}
}
\caption{Example 1: Errors, halving tolerance for each refinement, by varying $\alpha$ for the solid displacement, $\boldsymbol \eta$, (top-left), solid velocity, $\boldsymbol \xi$, (top-right), and fluid velocity, $\boldsymbol u$, (bottom) at the final time. }
\label{halvingeps}
\end{figure}
To correct the loss of accuracy that occurs for a larger value of $\epsilon$, we model the same setting as in Figure~\ref{eps1e-3}, but halve $\epsilon$ at the same rate as $\tau$, i.e., using the following set of parameters:
$$\left\{  \tau, h, \epsilon  \right\} = \left\{  \frac{0.02}{2^i}, \frac{0.25}{2^i}, \frac{10^{-3}}{2^i} \right\}_{i=0}^3.$$
Figure~\ref{halvingeps} shows the rates of convergence obtained in this case. We observe that the convergence rates for fluid and solid velocity improve when $\epsilon $ is reduced at the same rate as $\tau$. 

To further emphasize the differences in the solution for different values of $\epsilon$, shown in Figures~\ref{BaseExample},~\ref{eps1e-3} and~\ref{halvingeps}, the convergence rates for the fluid and solid velocity obtained using $\epsilon=10^{-4}$, $\epsilon=10^{-3},$ and by decreasing $\epsilon$ at the same rate as $\tau$ are shown in Table~\ref{Trates}.
\begin{table}[htb]
\begin{center}
 \begin{tabular}{c c c c c c c c c c c} 
 \hline
 $\epsilon=10^{-4}$ &$\boldsymbol{u}$&&&&&$\boldsymbol{\xi}$&&&\\ [0.5ex] 
 $\alpha: $ & $10$ & $100$ &  $500$ & $1000$& &$10$ &$100$ &$500$ &$1000$\\
 \hline
  $ \tau$  &  -&-  &- &- &  & - &-  &-  &- \\
 $ \tau/2$   &  2.21&2.99  &1.61 &1.25 &  & 2.49 &2.59  &2.56  &2.49 \\
 $ \tau/4$&2.37 &2.91  &2.07  &2.22 & & 2.26 &2.43  &2.42  &2.34 \\
 $ \tau/8$& 2.49&2.30  &1.99  &1.99 & &  2.38& 2.51 &2.46  &2.35 \\
 &  &  & & &  &  &  &  & \\
 \hline\hline
  $\epsilon=10^{-3}$ &$\boldsymbol{u}$&&&&&$\boldsymbol{\xi}$&&&\\
 $\alpha: $ & $10$ & $100$ &  $500$ & $1000$& &$10$ &$100$ &$500$ &$1000$ \\
 \hline
  $ \tau$   &  -&-  &- &- &  & - &-  &-  &-\\
 $ \tau/2$   &  2.32&3.00  &2.17 &1.31 &  & 2.50 &2.60  &2.58  &2.49 \\
 $ \tau/4$&2.74 &2.09  &1.19  &1.42 & & 2.35 &2.43  &2.36  &2.16 \\
 $ \tau/8$& 1.46&1.69  &1.80  &1.76 & &  1.80& 2.50 &2.24  &1.91 \\
 &  &  & & &  &  &  &  & \\
 \hline\hline
  $\epsilon$ changing &$\boldsymbol{u}$&&&&&$\boldsymbol{\xi}$&&& \\
 $\alpha: $ & $10$ & $100$ &  $500$ & $1000$& &$10$ &$100$ &$500$ &$1000$ \\
 \hline
  $ \tau$   &  -&-  &- &- &  & - &-  &-  &- \\
 $ \tau/2$   &  2.31&3.00  &2.17 &1.31 &  & 2.50 &2.60  &2.58  &2.49 \\
 $ \tau/4$&2.48 &2.91  &2.07  &2.22 & & 2.29 &2.43  &2.42  &2.34 \\
 $ \tau/8$& 2.31&2.30  &1.99  &1.99 & &  2.34& 2.51 &2.46  &2.35 \\
 \hline 
 \end{tabular}
\end{center}
\caption{Example 1: Rates of convergence for the fluid  and  structure velocity obtained with $\theta=\frac12$ and the following values of $\epsilon:$ $\epsilon=10^{-4}$ (top), $\epsilon=10^{-3}$ (middle), and $\epsilon$ decreasing at the same rate as $\tau$ (bottom).}
\label{Trates}
\end{table}

 \begin{figure}[ht]
\centering{
\includegraphics[scale=0.55]{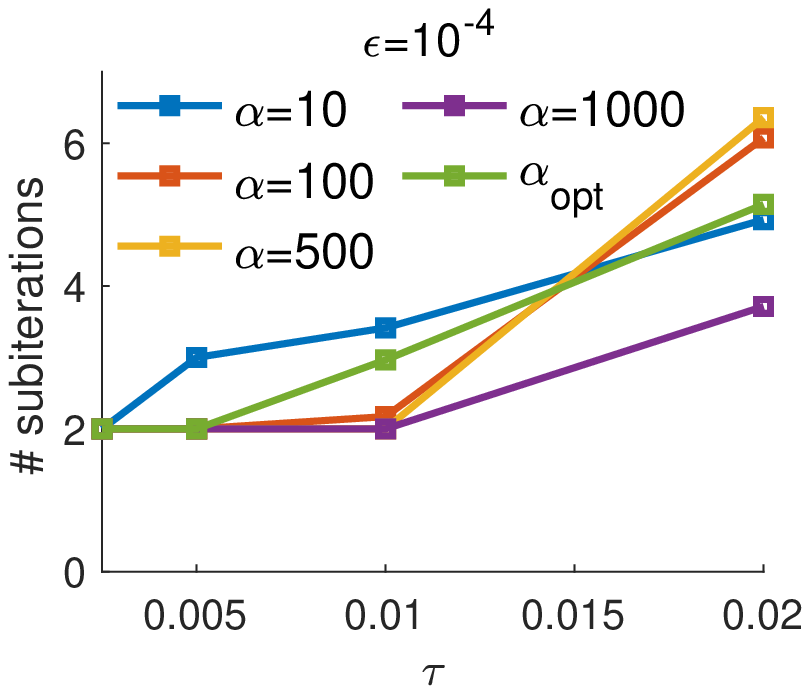}
\includegraphics[scale=0.55]{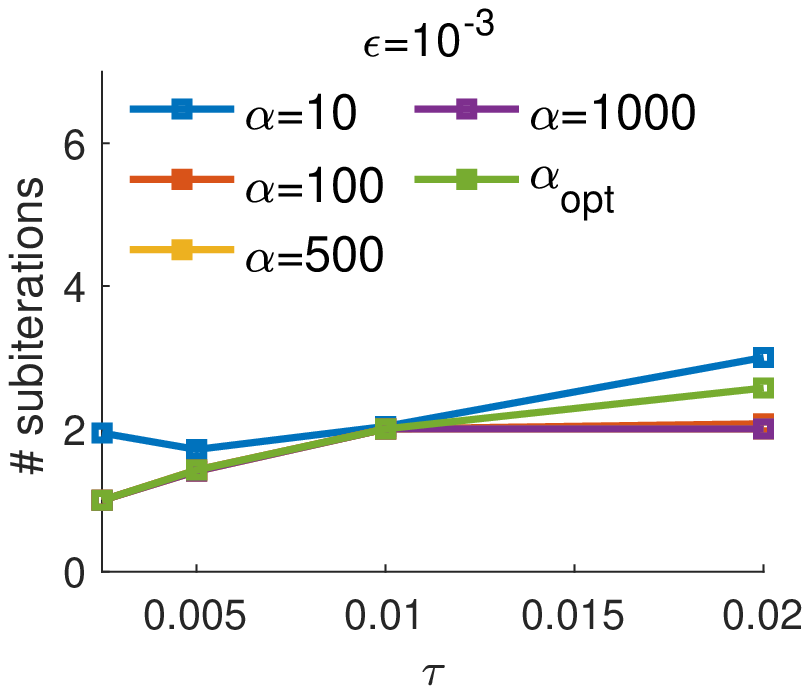}
\includegraphics[scale=0.55]{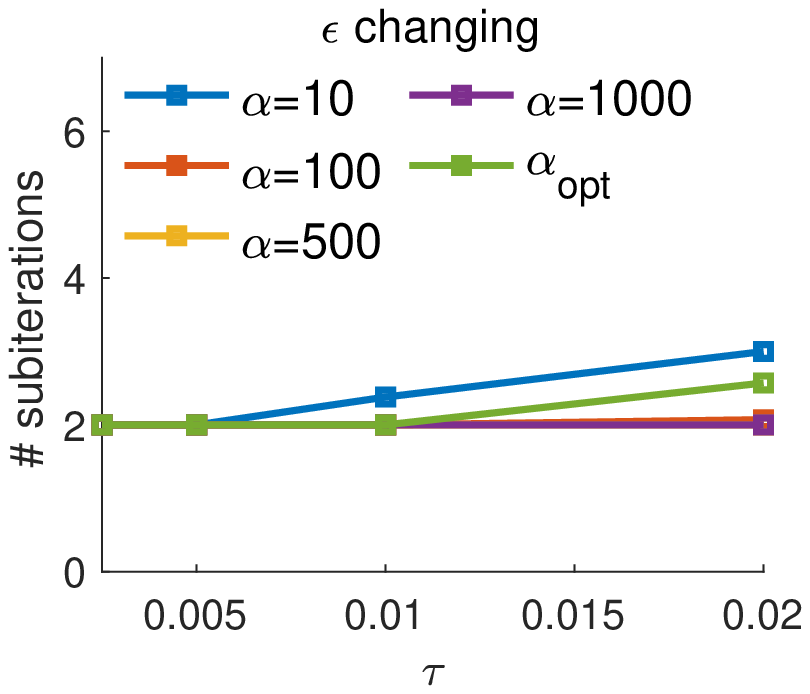}
}
\caption{Example 1: Number of sub-iterations when the tolerance is $\epsilon=10^{-4}$ (top left), $\epsilon=10^{-3}$ (top right), and when $\epsilon$ is halved in each run (bottom).}
\label{numberiterations}
\end{figure}
In  the cases presented above, we calculated the average number of sub-iterations in the sub-iterative step of our scheme. 
Figure~\ref{numberiterations} shows the average number of sub-iterations obtained with $\theta=\frac12$ and $\epsilon=10^{-4}$ (top-left), $\epsilon=10^{-3}$ (top-right), and with $\epsilon=\left\{\displaystyle\frac{10^{-3}}{2^i} \right\}_{i=0}^3$ (bottom). About at most 6 sub-iterations are needed when $\epsilon=10^{-4}$ for $\alpha=100$ and 500. However, we notice that in all cases, the number of sub-iterations decreases to about 2 when the discretization parameters decrease. As expected, a larger number of sub-iterations is required for a smaller value of $\epsilon$. However, when $\epsilon$ decreases at the same rate as $\tau$, the number of sub-iterations stays roughly the same in most cases. When looking across all three scenarios, we note that $\alpha=1000$ yields the lowest number of sub-iterations and $\alpha=10$ typically results in the highest number of sub-iterations with the exception of the coarsest $ \tau$ when the tolerance is fixed at $\epsilon=10^{-4}$.

\begin{table}[htb]
\begin{center}
 \begin{tabular}{c c c c c c c c} 
 \hline
 $ \tau$ & $h$ & $\rho_F$ & $\rho_S$ & $\epsilon$ & RN & RR & Alg. 1 \\ [0.5ex] 
 \hline\hline
$10^{-2}$ & $1.25 \cdot 10^{-1}$ & 1 & 1 & $10^{-3}$ & 70.47 & 5.43 & 2 \\ 
$5 \cdot 10^{-3}$ &  $1.25 \cdot 10^{-1}$ & 1 & 1 & $10^{-3}$ & 56.92& 3.98 & 2 \\ 
$10^{-2}$ &  $6.25 \cdot 10^{-2}$ & 1 & 1 & $10^{-3}$ & Does not converge & 10.4 & 2 \\ 
$10^{-2}$ &  $1.25 \cdot 10^{-1}$ & 1 & 10 & $10^{-3}$ & 54.07 & 11.8 & 1.03 \\ 
$10^{-2}$ &  $1.25 \cdot 10^{-1}$ & 10 & 1 & $10^{-3}$& Does not converge & 3.27 & 2 \\ 
$10^{-2}$ &  $1.25 \cdot 10^{-1}$ & 1 & 1 & $10^{-4}$ & 95.40 & 8.37 & 2.97 \\ 
 \hline
\end{tabular}
\end{center}
\caption{Example 1: The number of sub-iterations required by the Robin-Neumann (RN) method, the Robin-Robin (RR) method~\cite{badia2009robin}, and  the proposed method (Alg. 1) for different parameter values.}
 \label{tableConv}
\end{table}
Finally, we compare the number of sub-iterations required by our scheme and a couple of commonly used strongly-coupled methods for FSI problems: a Robin-Neumann scheme and a Robin-Robin scheme~\cite{badia2009robin}. We use the same parameter values in all cases, including the combination parameter $\alpha$, which is in this case set be $\alpha=\alpha_{opt}$. In the proposed method, we use $\theta=\frac12$.
 Table~\ref{tableConv} shows the number of sub-iterations required by all three methods for different parameter values. In all considered cases, the proposed method features a smaller number of sub-iterations compared to other methods. In particular, while the Robin-Neumann method did not converge when the spatial discretization parameter decreased,  in which case Robin-Robin methods required a larger number of sub-iterations, the number of sub-iterations for the proposed method did not change. We also note that for the proposed method, the number of sub-iterations decreased as the solid density increased, and increased as the tolerance, $\epsilon$, decreased. The same behavior with respect to $\epsilon$ was observed in Robin-Neumann and Robin-Robin methods.

\subsection{Example 2}
In the second example, we consider a classical benchmark problem typically used to validate FSI solvers~\cite{bukavc2014modular}. 
We consider the fluid flow in a two-dimensional channel interacting with a deformable wall.  The   fluid and structure domains are defined as ${\Omega}_F = (0,5) \times (0,0.5)$ and $\Omega_S= (0,5) \times (0.5,0.6)$, respectively. We consider the FSI problem~\eqref{flow}-\eqref{initial},  where we add a linear ``spring'' term, $\gamma \boldsymbol \eta$, to the elastodynamic equation~\eqref{solid}, yielding:
\begin{align*}
\rho_S \partial_t \boldsymbol \xi + \gamma \boldsymbol \eta = \nabla \cdot \boldsymbol \sigma_S(\boldsymbol \eta)
\qquad \textrm{in} \; {\Omega}_S \times (0,T).
\end{align*}
The term $\gamma \boldsymbol \eta$ is obtained from
the axially symmetric model and it represents a spring keeping the top and
bottom boundaries in a two-dimensional model connected~\cite{bukavc2014modular}.

We use the following parameter values: $ \rho_S=1.1$ g/cm$^3, \mu_S=1.67785 \cdot 10^6$ dyne/cm$^2$, $ \gamma=4 \cdot 10^6$  dyne/cm$^4$, $\lambda_S=8.22148  \cdot 10^7$  dyne/cm$^2$, $\rho_F$ = 1 g/cm$^3,$ and $\mu_F$ = 0.035 g/cm$\cdot $s, which are within physiologically realistic values of blood flow in compliant arteries.  In this example, we set  $\theta=\frac12, \epsilon=10^{-4}$ and $\alpha=\alpha_{opt}$, given by~\eqref{alphaformula}.  

The flow is driven by prescribing a time-dependent pressure drop at the inlet and outlet sections, as defined in~\eqref{neumann}, where
\begin{align}
    p_{in}(t)=\left\{
                \begin{array}{ll}
                  \displaystyle\frac{p_{max}}{2} \left[1-\cos \left(\displaystyle\frac{2 \pi t}{t_{max}}\right) \right],    & \text{if } t \leq t_{max}\\
                  0,    &\text{if }t > t_{max}
                \end{array}
              \right.
             ,\text{        }p_{out}=0,
\end{align}
for all $ t \in (0,T)$.
The pressure pulse is in effect for $t_{max} = 0.03$ s with maximum pressure $p_{max}=1.333 \times 10^4$ dyne/$\text{cm}^2$.  The final time is $T=12$ ms, and the time step is $\tau=10^{-4}$. At the bottom fluid boundary we prescribe symmetry conditions given by:
$$
u_y=0, \quad \frac{\partial u_x}{\partial y}=0.
$$
We assume that the structure is fixed at the edges, with zero normal stress at the external boundary, as specified in~\eqref{homostructure1}-\eqref{homostructure2}.

We use $\mathbb{P}_2- \mathbb{P}_1$ elements   for the fluid velocity and pressure, respectively, and  $\mathbb{P}_2$ elements for  the structure velocity and displacement on a mesh containing 1,000 elements in the fluid domain and 300 elements in the structure domain. 
The problem is solved using a second-order monolithic scheme, the proposed strongly-coupled scheme detailed in Algorithm~\ref{algorithm1}, and a partitioned, loosely-coupled method presented in~\cite{paper1}.

\begin{figure}[h!]
    \centering{
        \includegraphics[scale=0.43]{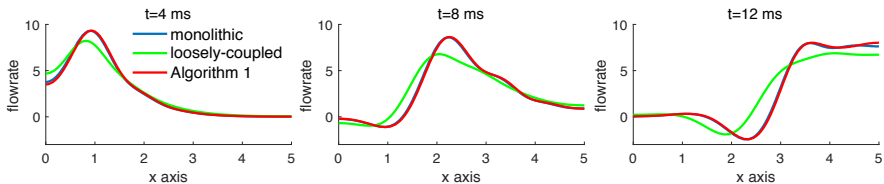}
        }
        \caption{Example 2. Fluid flowrate vs. x-axis obtained with a monolithic scheme, Algorithm~\ref{algorithm1} and a loosely coupled scheme proposed in~\cite{paper1}.}
        \label{flowrate}
\end{figure}

\begin{figure}[h!]
    \centering{
        \includegraphics[scale=0.43]{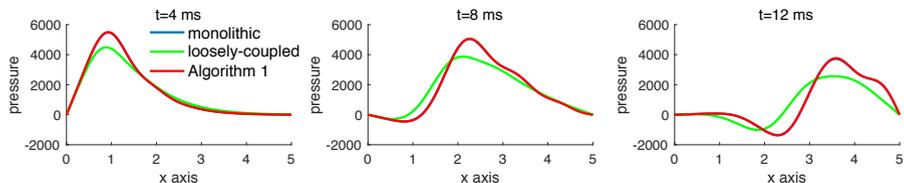}
        }
        \caption{Example 2. Fluid pressure at the centerline vs. x-axis obtained with a monolithic scheme, Algorithm~\ref{algorithm1} and a loosely coupled scheme proposed in~\cite{paper1}.}
        \label{pressure}
\end{figure}

\begin{figure}[h!]
    \centering{
        \includegraphics[scale=0.43]{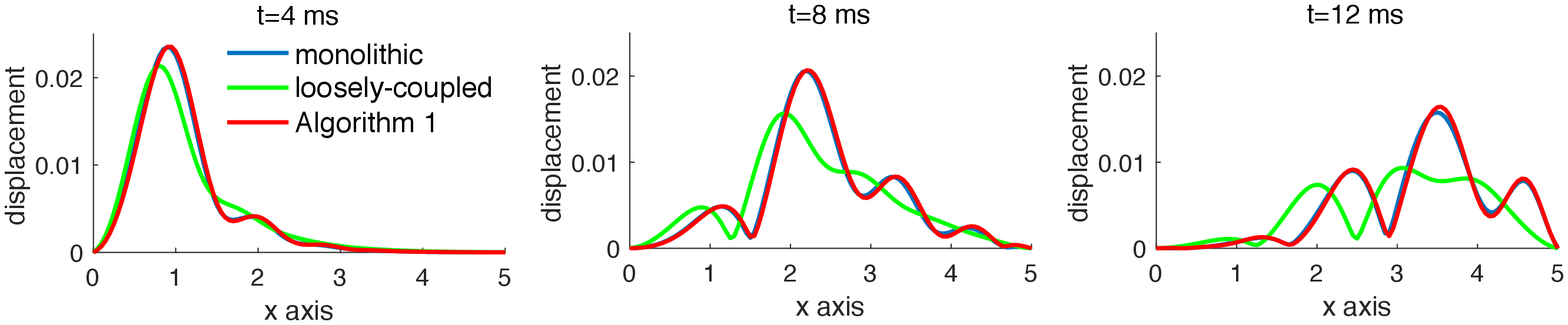}
        }
        \caption{Example 2. Fluid-structure interface displacement magnitude vs. x-axis obtained with a monolithic scheme, Algorithm~\ref{algorithm1} and a loosely coupled scheme proposed in~\cite{paper1}.}
        \label{displ_ex2}
\end{figure}

Figures~\ref{flowrate},~\ref{pressure} and~\ref{displ_ex2} show a comparison of the flowrate,  pressure at the centerline, and the interface displacement magnitude at times $t=4, 8,$ and 12 ms. An excellent agreement between the monolithic scheme and the proposed scheme is observed in all cases. We notice a larger discrepancy between the loosely-coupled scheme and the other two methods, which is due to the lower accuracy and the operator splitting error often present in partitioned methods. This is commonly corrected by taking a smaller time step in the partitioned scheme. However, the proposed method provides a greater accuracy with only few sub-iterations, combining the strengths of both monolithic and partitioned approaches.

\section{Conclusions}

In this work, we propose a novel strongly-coupled method for FSI problems with thick structures. The  method is based on the generalized Robin coupling conditions, which are split so that both the fluid and structure sub-problems are implemented using a Robin-type boundary condition at the interface.
 In order to discretize the FSI problem in time, our scheme implements a refactorization of the Cauchy's one-legged `$\theta$-like' method, where the fluid and structure sub-problems are solved in the BE-FE fashion. The BE part of the algorithm is iterated until convergence, and the FE part is equivalent to linear extrapolations, making it computationally inexpensive to solve. In this approach, the proposed method is second-order accurate when $\theta=\frac12$. Using energy estimates, we  show that the sub-iterative part of the scheme is convergent and that the method is stable provided $\theta \in [\frac12, 1].$

The theoretical expectations have been validated in numerical examples. To discretize the problem in space, we use the finite element method. We began by computing the convergence rates using the method of manufactured solutions. In order to explore the variables in our scheme, we analyzed rates amongst different values of the combination parameter, $\alpha$, the time-discretization parameter, $\theta$, and tolerance, $\epsilon$, used to measure convergence of the BE steps. We considered a wide range of values for the combination parameter, $\alpha$, as well as an optimal value $\alpha_{opt}$  proposed in~\cite{gerardo2010analysis}, which showed to be effective at maintaining optimal convergence rates and reasonably  reducing the error compared to other tested values. We obtained rates of $\mathcal{O}(\tau^2)$ when $\theta=\frac12$, while the rates decreased to orders of convergence between $\mathcal{O}(\tau)$ and $\mathcal{O}(\tau^2)$ for other values of $\theta$.  We also experienced sub-optimality in some cases if the tolerance was too large, in particular for $\epsilon=10^{-3}$. However, our results show that decreasing $\epsilon$ at the same rate as $\tau$ corrects the sub-optimalities and yields   the optimal convergence rate. 

To better understand the relation between the parameters in the problem and the computational cost of our method, we computed  the average number of sub-iterations in the BE part of the scheme. Our results show that the number of sub-iterations is reduced as the time step, $\tau$, decreases, and in  most cases considered in our study, approaches 2. We also observe that while  the case when $\epsilon=10^{-4}$ requires more sub-iterations than when $\epsilon=10^{-3}$, if we start from the latter value and decrease it at the same rate as $\tau$,  the number of sub-iterations remains roughly the same, while preserving optimal convergence rates. We also compared the number of sub-iterations required by our scheme to the ones needed by the Robin-Neumann method and the Robin-Robin method across different parameter values  and observed that in every case, our scheme has fewer sub-iterations. 

Finally, we solved an FSI problem on a benchmark example of a flow in a channel using parameters within physiologically realistic values of blood flow in compliant arteries. We compared the proposed method to both a monolithic and a non-iterative partitioned scheme, obtaining an excellent agreement with the monolithic scheme. 

A drawback of this work is that the analysis and simulations are performed assuming that the fluid-structure coupling is linear and that the fluid domain is fixed. The extensions of the method to moving domain FSI problems, as well as variable time-stepping strategies, are a focus of our on-going research.

\section{Acknowledgments}
This work was partially supported by NSF under grants DMS
  1912908 and DCSD 1934300. 

\section{Conflict of interest}
On behalf of all authors, the corresponding author states that there is no conflict of interest. 


\bibliographystyle{plain}
\bibliography{bibfile}

\def\soft#1{\leavevmode\setbox0=\hbox{h}\dimen7=\ht0\advance \dimen7
  by-1ex\relax\if t#1\relax\rlap{\raise.6\dimen7
  \hbox{\kern.3ex\char'47}}#1\relax\else\if T#1\relax
  \rlap{\raise.5\dimen7\hbox{\kern1.3ex\char'47}}#1\relax \else\if
  d#1\relax\rlap{\raise.5\dimen7\hbox{\kern.9ex \char'47}}#1\relax\else\if
  D#1\relax\rlap{\raise.5\dimen7 \hbox{\kern1.4ex\char'47}}#1\relax\else\if
  l#1\relax \rlap{\raise.5\dimen7\hbox{\kern.4ex\char'47}}#1\relax \else\if
  L#1\relax\rlap{\raise.5\dimen7\hbox{\kern.7ex
  \char'47}}#1\relax\else\message{accent \string\soft \space #1 not
  defined!}#1\relax\fi\fi\fi\fi\fi\fi}
\begin{thebibliography}{10}

\bibitem{badia2008fluid}
S.~Badia, F.~Nobile, and C.~Vergara.
\newblock Fluid-structure partitioned procedures based on {R}obin transmission
  conditions.
\newblock {\em Journal of Computational Physics}, 227:7027--7051, 2008.

\bibitem{badia}
S.~Badia, F.~Nobile, and C.~Vergara.
\newblock Fluid-structure partitioned procedures based on {R}obin transmission
  conditions.
\newblock {\em Journal of Computational Physics}, 227(14):7027--7051, 2008.

\bibitem{badia2009robin}
S.~Badia, F.~Nobile, and C.~Vergara.
\newblock Robin-{R}obin preconditioned {K}rylov methods for fluid-structure
  interaction problems.
\newblock {\em Computer Methods in Applied Mechanics and Engineering},
  198(33):2768--2784, 2009.

\bibitem{badia2008modular}
S.~Badia, A.~Quaini, and A.~Quarteroni.
\newblock Modular vs. non-modular preconditioners for fluid--structure systems
  with large added-mass effect.
\newblock {\em Computer Methods in Applied Mechanics and Engineering},
  197(49):4216--4232, 2008.

\bibitem{baek2012convergence}
H.~Baek and G.~Karniadakis.
\newblock A convergence study of a new partitioned fluid--structure interaction
  algorithm based on fictitious mass and damping.
\newblock {\em Journal of Computational Physics}, 231(2):629--652, 2012.

\bibitem{banks2014analysis2}
J.~Banks, W.~Henshaw, and D.~Schwendeman.
\newblock An analysis of a new stable partitioned algorithm for {FSI} problems.
  {P}art {I}: {I}ncompressible flow and elastic solids.
\newblock {\em Journal of Computational Physics}, 269:108--137, 2014.

\bibitem{banks2014analysis}
J.~Banks, W.~Henshaw, and D.~Schwendeman.
\newblock An analysis of a new stable partitioned algorithm for {FSI} problems.
  {P}art {II}: {I}ncompressible flow and structural shells.
\newblock {\em Journal of Computational Physics}, 268:399--416, 2014.

\bibitem{bathe2004finite}
K.-J. Bathe and H.~Zhang.
\newblock Finite element developments for general fluid flows with structural
  interactions.
\newblock {\em International Journal for Numerical Methods in Engineering},
  60(1):213--232, 2004.

\bibitem{bazilevs2008isogeometric}
Y.~Bazilevs, V.~Calo, T.~Hughes, and Y.~Zhang.
\newblock Isogeometric fluid-structure interaction: theory, algorithms, and
  computations.
\newblock {\em Computational Mechanics}, 43(1):3--37, 2008.

\bibitem{bukavc2014modular}
M.~Buka{\v{c}}, S.~{\v{C}}ani{\'c}, R.~Glowinski, B.~Muha, and A.~Quaini.
\newblock A modular, operator-splitting scheme for fluid--structure interaction
  problems with thick structures.
\newblock {\em International Journal for Numerical Methods in Fluids},
  74(8):577--604, 2014.

\bibitem{multilayered}
M.~Buka{\v{c}}, S.~{\v{C}}ani{\'c}, and B.~Muha.
\newblock A partitioned scheme for fluid--composite structure interaction
  problems.
\newblock {\em Journal of Computational Physics}, 281:493--517, 2015.

\bibitem{bukavc2016stability}
M.~Bukac and B.~Muha.
\newblock {S}tability and {C}onvergence {A}nalysis of the {E}xtensions of the
  {K}inematically {C}oupled {S}cheme for the {F}luid-{S}tructure {I}nteraction.
\newblock {\em SIAM Journal on Numerical Analysis}, 54(5):3032--3061, 2016.

\bibitem{bukavc2012fluid}
M.~Buka{\v{c}}, I.~Yotov, and P.~Zunino.
\newblock An operator splitting approach for the interaction between a fluid
  and a multilayered poroelastic structure.
\newblock {\em Numerical Methods for Partial Differential Equations},
  31(4):1054--1100, 2015.

\bibitem{burkardt2020refactorization}
J.~Burkardt and C.~Trenchea.
\newblock Refactorization of the midpoint rule.
\newblock {\em Applied Mathematics Letters}, 107:106438, 2020.

\bibitem{burman2020fully}
E.~Burman, R.~Durst, M.~Fern{\'a}ndez, and J.~Guzm{\'a}n.
\newblock Fully discrete loosely coupled {R}obin-{R}obin scheme for
  incompressible fluid-structure interaction: stability and error analysis.
\newblock {\em arXiv preprint arXiv:2007.03846}, 2020.

\bibitem{burman2019stability}
E.~Burman, R.~Durst, and J.~Guzman.
\newblock Stability and error analysis of a splitting method using
  {R}obin-{R}obin coupling applied to a fluid-structure interaction problem.
\newblock {\em arXiv preprint arXiv:1911.06760}, 2019.

\bibitem{burman2009stabilization}
E.~Burman and M.~Fern{\'a}ndez.
\newblock Stabilization of explicit coupling in fluid-structure interaction
  involving fluid incompressibility.
\newblock {\em Computer Methods in Applied Mechanics and Engineering},
  198:766--784, 2009.

\bibitem{burman2013unfitted}
E.~Burman and M.~Fern{\'a}ndez.
\newblock An unfitted {N}itsche method for incompressible fluid-structure
  interaction using overlapping meshes.
\newblock {\em Computer Methods in Applied Mechanics and Engineering}, 279:497
  -- 514, 2014.

\bibitem{MR1013996}
A.-L. Cauchy.
\newblock {\em \'{E}quations diff\'{e}rentielles ordinaires}.
\newblock \'{E}ditions \'{E}tudes Vivantes, Lt\'{e}e., Ville Saint-Laurent, QC;
  Johnson Reprint Corp., New York, 1981.
\newblock Unpublished course. Fragment, With a preface by J. Dieudonn\'{e},
  With an introduction by C. Gilain.

\bibitem{causin2005added}
P.~Causin, J.F. Gerbeau, and F.~Nobile.
\newblock Added-mass effect in the design of partitioned algorithms for
  fluid-structure problems.
\newblock {\em Computer Methods in Applied Mechanics and Engineering},
  194(42-44):4506--4527, 2005.

\bibitem{degroote2011similarity}
J.~Degroote.
\newblock On the similarity between {D}irichlet--{N}eumann with interface
  artificial compressibility and {R}obin--{N}eumann schemes for the solution of
  fluid-structure interaction problems.
\newblock {\em Journal of ComputationalPphysics}, 230(17):6399--6403, 2011.

\bibitem{degroote2008stability}
J.~Degroote, P.~Bruggeman, R.~Haelterman, and J.~Vierendeels.
\newblock Stability of a coupling technique for partitioned solvers in {FSI}
  applications.
\newblock {\em Computers \& Structures}, 86(23):2224--2234, 2008.

\bibitem{deparis2003acceleration}
S.~Deparis, M.~Fern{\'a}ndez, and L.~Formaggia.
\newblock Acceleration of a fixed point algorithm for fluid-structure
  interaction using transpiration conditions.
\newblock {\em ESAIM: Mathematical Modelling and Numerical
  Analysis-Mod{\'e}lisation Math{\'e}matique et Analyse Num{\'e}rique},
  37(4):601--616, 2003.

\bibitem{farhat2006provably}
C.~Farhat, K.~Van~der Zee, and P.~Geuzaine.
\newblock Provably second-order time-accurate loosely-coupled solution
  algorithms for transient nonlinear computational aeroelasticity.
\newblock {\em Computer Methods in Applied Mechanics and Engineering},
  195(17):1973--2001, 2006.

\bibitem{Fernandez2012incremental}
M.~Fern{\'a}ndez.
\newblock {Incremental displacement-correction schemes for incompressible
  fluid-structure interaction: stability and convergence analysis}.
\newblock {\em {Numerische Mathematik}}, 123:210--65, 2012.

\bibitem{fernandez2013fully}
M.~Fern{\'a}ndez and M.~Landajuela.
\newblock A fully decoupled scheme for the interaction of a thin-walled
  structure with an incompressible fluid.
\newblock {\em Comptes Rendus Mathematique}, 351(3-4):161--164, 2013.

\bibitem{fernandez2015generalized}
M.~Fern{\'a}ndez, J.~Mullaert, and M.~Vidrascu.
\newblock Generalized {R}obin--{N}eumann explicit coupling schemes for
  incompressible fluid-structure interaction: {S}tability analysis and
  numerics.
\newblock {\em International Journal for Numerical Methods in Engineering},
  101(3):199--229, 2015.

\bibitem{gee2011truly}
M.~Gee, U.~K{\"u}ttler, and W.~Wall.
\newblock Truly monolithic algebraic multigrid for fluid--structure
  interaction.
\newblock {\em International Journal for Numerical Methods in Engineering},
  85(8):987--1016, 2011.

\bibitem{gerardo2010analysis}
L.~Gerardo-Giorda, F.~Nobile, and C.~Vergara.
\newblock Analysis and {O}ptimization of {R}obin-{R}obin {P}artitioned
  {P}rocedures in {F}luid-{S}tructure {I}nteraction {P}roblems.
\newblock {\em SIAM Journal on Numerical Analysis}, 48(6):2091--2116, 2010.

\bibitem{gerbeau2003quasi}
J.-F. Gerbeau and M.~Vidrascu.
\newblock A {Q}uasi-{N}ewton {A}lgorithm {B}ased on a {R}educed {M}odel for
  {F}luid-{S}tructure {I}nteraction {P}roblems in {B}lood {F}lows.
\newblock {\em ESAIM: Mathematical Modelling and Numerical
  Analysis-Mod{\'e}lisation Math{\'e}matique et Analyse Num{\'e}rique},
  37(4):631--647, 2003.

\bibitem{gigante2019stability}
G.~Gigante and C.~Vergara.
\newblock On the stability of a loosely-coupled scheme based on a {R}obin
  interface condition for fluid-structure interaction.
\newblock {\em arXiv preprint arXiv:1905.06593}, 2019.

\bibitem{hansbo2005nitsche}
P.~Hansbo.
\newblock Nitsche’s method for interface problems in computational mechanics.
\newblock {\em GAMM-Mitt.}, 28(2):183--206, 2005.

\bibitem{hecht2012new}
F.~Hecht.
\newblock New development in {F}ree{F}em++.
\newblock {\em Journal of Numerical Mathematics}, 20(3-4):251--266, 2012.

\bibitem{heil2008solvers}
M.~Heil, A.~Hazel, and J.~Boyle.
\newblock Solvers for large-displacement fluid--structure interaction problems:
  segregated versus monolithic approaches.
\newblock {\em Computational Mechanics}, 43(1):91--101, 2008.

\bibitem{hron2006monolithic}
J.~Hron and S.~Turek.
\newblock A {M}onolithic {FEM/M}ultigrid {S}olver for an {ALE F}ormulation of
  {F}luid-{S}tructure {I}nteraction with {A}pplications in {B}iomechanics.
\newblock In {\em Fluid-Structure Interaction}, volume~53 of {\em Lecture Notes
  in Computational Science and Engineering}, pages 146--170. Springer Berlin
  Heidelberg, 2006.

\bibitem{langer2018numerical}
U.~Langer and H.~Yang.
\newblock Numerical simulation of fluid--structure interaction problems with
  hyperelastic models: {A} monolithic approach.
\newblock {\em Mathematics and Computers in Simulation}, 145:186--208, 2018.

\bibitem{lukavcova2013kinematic}
M~Luk{\'a}{\v{c}}ov{\'a}-Medvid’ov{\'a}, G~Rusn{\'a}kov{\'a}, and
  A~Hundertmark-Zau{\v{s}}kov{\'a}.
\newblock Kinematic splitting algorithm for fluid--structure interaction in
  hemodynamics.
\newblock {\em Computer Methods in Applied Mechanics and Engineering},
  265:83--106, 2013.

\bibitem{BorSunMulti}
B.~Muha and S.~\v{C}ani{\'c}.
\newblock Existence of a solution to a fluid-multi-layered-structure
  interaction problem.
\newblock {\em Journal of Differential Equations}, 256(2):658--706, 2014.

\bibitem{nobile2001numerical}
F.~Nobile.
\newblock {\em Numerical approximation of fluid-structure interaction problems
  with application to haemodynamics}.
\newblock PhD thesis, EPFL, Switzerland, 2001.

\bibitem{nobile2008effective}
F.~Nobile and C.~Vergara.
\newblock An effective fluid-structure interaction formulation for vascular
  dynamics by generalized {R}obin conditions.
\newblock {\em SIAM Journal on Scientific Computing}, 30:731--763, 2008.

\bibitem{oyekole2018second}
O.~Oyekole, C.~Trenchea, and M.~Buka\v{c}.
\newblock A {S}econd-{O}rder in {T}ime {A}pproximation of {F}luid-{S}tructure
  {I}nteraction {P}roblem.
\newblock {\em SIAM Journal on Numerical Analysis}, 56(1):590--613, 2018.

\bibitem{ryzhakov2010monolithic}
P.~Ryzhakov, R.~Rossi, S.~Idelsohn, and E.~O{\~n}ate.
\newblock A monolithic {L}agrangian approach for fluid--structure interaction
  problems.
\newblock {\em Computational Mechanics}, 46(6):883--899, 2010.

\bibitem{paper1}
A.~Seboldt and M.~Buka{\v{c}}.
\newblock A non-iterative domain decomposition method for the interaction
  between a fluid and a thick structure.
\newblock {\em arXiv preprint arXiv:2007.00781}, 2020.

\bibitem{serino2019stable}
D.~Serino, J.~Banks, W.~Henshaw, and D.~Schwendeman.
\newblock A stable added-mass partitioned ({AMP}) algorithm for elastic solids
  and incompressible flow: model problem analysis.
\newblock {\em SIAM Journal on Scientific Computing}, 41(4):A2464--A2484, 2019.

\bibitem{yu2013generalized}
Y.~Yu, H.~Baek, and G.~Karniadakis.
\newblock Generalized fictitious methods for fluid--structure interactions:
  analysis and simulations.
\newblock {\em Journal of Computational Physics}, 245:317--346, 2013.

\end{thebibliography}

\end{document}